\newtheorem{definition}{Definition}
\newtheorem{lemma}{Lemma}[section]
\newtheorem{theorem}[lemma]{Theorem}
\newtheorem*{theorem*}{Theorem}
\newtheorem{cor}[lemma]{Corollary}
\newtheorem{claim*}{Claim}
\newtheorem{remark}[lemma]{Remark}
\newtheorem{example}[lemma]{Example}
\newtheorem{conjecture}[lemma]{Conjecture}
\numberwithin{equation}{section}
\newcommand{\rank}{\operatorname{rank}}
\newcommand{\Osh}{{\mathcal O}}
\newcommand{\Sym}{\operatorname{Sym}}
\newcommand{\PP}{\mathbb{P}}
\newcommand{\RR}{\mathbb{R}}
\newcommand{\HH}{\mathcal{H}}
\newcommand{\W}{\mathcal{W}}
\def\H{\mathcal{H}}
\def\Sym{\operatorname{Sym}}
\def\a{\alpha}
\def\e{\eta}
\def\GL{\operatorname{GL}}
\begin{document}
\title{}
\date{}
\author{Jos\'e Acevedo}
\address{
Departamento de matem\'aticas\\
Universidad de los Andes\\
Carrera $1^{\rm ra}\#18A-12$\\ 
Bogot\'a, Colombia
}
\email{jg.acevedo36@uniandes.edu.co}

\author{
Mauricio Velasco\\ 
}
\address{
Departamento de matem\'aticas\\
Universidad de los Andes\\
Carrera $1^{\rm ra}\#18A-12$\\ 
Bogot\'a, Colombia
}
\email{mvelasco@uniandes.edu.co}
\subjclass[2010]{Primary 14P05 
Secondary 20F55.} 
\keywords{}
\title{Witness sets for nonnegative polynomials }

\title{Test Sets for Nonnegativity of Polynomials Invariant under a Finite Reflection Group}
\begin{abstract} A set $S\subset \RR^n$ is a  nonnegativity witness for a set $U$ of real homogeneous polynomials if $F$ in $U$ is nonnegative on $\mathbb{R}^n$ if and only if it is nonnegative at all points of $S$. We prove that the union of the hyperplanes perpendicular to the elements of a root system $\Phi\subseteq \mathbb{R}^n$ is a witness set for nonnegativity of forms of low degree which are invariant under the reflection group defined by $\Phi$. We prove that our bound for the degree is sharp for all reflection groups which contain multiplication by $-1$. We then characterize subspaces of forms of arbitrarily high degree where this union of hyperplanes is a nonnegativity witness set. Finally we propose a conjectural generalization of Timofte's half-degree principle for finite reflection groups.
\end{abstract}

\maketitle

\section{ Introduction}
The general problem of deciding whether a homogeneous polynomial $F$ in $n$ variables with real coefficients is nonnegative has been one of the guiding questions of real algebraic geometry since the time of Hilbert. The intrinsic interest of this question has been complemented by a recent surge in its applications to polynomial optimization~\cite{BPT}, control theory~\cite{L} and to the general theory of aproximation algorithms~\cite{STEURER} among others. For all these applications, finding simple and efficient certificates for guaranteeing the nonnegativity of a given polynomial is of paramount importance.  

In the case of polynomials which are invariant under symmetric groups, such certificates have taken the form of ``nonnegativity witness sets", that is subsets $S\subset \RR^n$ such that nonnegativity of $F$ on $S$ is equivalent to the nonnegativity of $F$ in $\RR^n$. If the witness set $S$ is sufficiently simple then verifying that $F$ is nonnegative becomes a simpler problem. The work of several authors has provided important and interesting examples of nonnegativity witness sets for symmetric polynomials~\cite{Ch},~\cite{Ha},~\cite{Ti},~\cite{R}, ~\cite{IW}.  The following theorems are some representative results,
 
\begin{theorem*}[Choi-Lam-Reznick \cite{Ch}]
An $n$-ary even symmetric sextic is nonnegative if and only if it is so at the $n$ points $(1,0\ldots,0),(1,1,0,\ldots,0),\ldots,(1,1\ldots,1)$.
\end{theorem*}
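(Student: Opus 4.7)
The plan is to reduce the problem to nonnegativity of a symmetric cubic on the nonnegative orthant via the classical substitution $y_i = x_i^2$. Since $F$ is even and symmetric, one has $F(x) = G(x_1^2, \ldots, x_n^2)$ for a symmetric cubic $G \in \mathbb{R}[y_1, \ldots, y_n]$, so $F \geq 0$ on $\mathbb{R}^n$ is equivalent to $G \geq 0$ on $\mathbb{R}^n_{\geq 0}$. In the power-sum basis, $G = A p_1^3 + B p_1 p_2 + C p_3$, and evaluation at $\mathbf{1}_k$ (where $p_r(\mathbf{1}_k) = k$) gives $G(\mathbf{1}_k) = k\, q(k)$ with $q(t) := A t^2 + B t + C$; the hypothesis thus becomes $q(k) \geq 0$ for every $k \in \{1, \ldots, n\}$. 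By homogeneity I would restrict attention to the simplex $\Delta = \{y \geq 0 : p_1(y) = 1\}$, on which $G(y) = A + B p_2(y) + C p_3(y)$ is affine in the moment pair $(p_2, p_3)$, and on which the rescaled witness points $\mathbf{1}_k/k$ realize the values $(p_2, p_3) = (1/k, 1/k^2)$.

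The key step will be the geometric claim that the moment map $\phi(y) = (p_2(y), p_3(y))$ sends $\Delta$ into the convex polygon
\[
K := \operatorname{conv}\bigl\{(1/k, 1/k^2) : k = 1, \ldots, n\bigr\} \subset \mathbb{R}^2.
\]
Granted this claim, any convex combination $\phi(y) = \sum_k \lambda_k (1/k, 1/k^2)$ gives $G(y) = \sum_k \lambda_k (A + B/k + C/k^2) = \sum_k \lambda_k\, q(k)/k^2 \geq 0$, finishing the proof. The polygon $K$ is cut out by $n$ linear inequalities: the ``upper'' chord from $(1,1)$ to $(1/n, 1/n^2)$ and the $n-1$ ``lower'' chords between consecutive witness points on the parabola $v = u^2$. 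In homogeneous form the upper-chord inequality reads $(n+1) p_1 p_2 \geq n p_3 + p_1^3$ and, once expanded, becomes $(n-2) \sum_{i \neq j} y_i^2 y_j \geq 6 \sum_{i < j < \ell} y_i y_j y_\ell$, which I would derive by summing AM-GM on each triple. The $k$-th lower-chord inequality $k(k+1) p_3 + p_1^3 \geq (2k+1) p_1 p_2$ rests on the identity
\[
k(k+1) p_3 - (2k+1) p_1 p_2 + p_1^3 \;=\; \sum_{i=1}^n y_i\, (k y_i - p_1)\bigl((k+1) y_i - p_1\bigr).
\]

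The hard part will be proving nonnegativity of this last sum on $\mathbb{R}^n_{\geq 0}$, since individual summands become negative when $y_i \in (p_1/(k+1), p_1/k)$. I plan to exploit a Lagrange-multiplier argument, essentially Timofte's half-degree principle in disguise: the derivative of the summand with respect to $y_i$ is the quadratic $3k(k+1) y_i^2 - 2(2k+1) y_i + 1$, so the Lagrange conditions at any interior critical point on $\Delta$ force at most two distinct positive coordinate values, and boundary descent handles vanishing coordinates. At a one-value point $y = \mathbf{1}_m/m$ the sum simplifies to $(k-m)(k+1-m)/m^2$, which is nonnegative because the two consecutive integer factors have the same sign for every integer $m$. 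At a two-positive-value point with multiplicities $(i, n-i)$, the parametrization $a = (1 + (n-i)c)/n$, $b = (1 - ic)/n$ reduces the sum to an explicit cubic in $c$ on the admissible interval, whose nonnegativity should follow by elementary factorization. This would complete the verification of all chord inequalities and hence the theorem.
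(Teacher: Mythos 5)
Your moment-polytope strategy is a genuinely different and attractive route; note that the paper itself does not prove this statement but only cites \cite{Ch}, and the paper's own Bezout/Chevalley machinery (via Corollary~\ref{highCodim} applied to $B_n$) only recovers a \emph{weaker} version of Choi--Lam--Reznick for sextics (nonnegativity on $2$-dimensional intersections of reflection hyperplanes), while the full statement with $n$ test points is only claimed to follow from Conjecture~\ref{ABetter}. Your reduction $F(x)=G(x^2)$, the evaluation $G(\mathbf{1}_k)=k\,q(k)$, the identification of $K=\mathrm{conv}\{(1/k,1/k^2)\}$ as the intersection of one upper-chord and $n-1$ lower-chord half-planes, the AM--GM proof of the upper chord $(n-2)\sum_{i\ne j}y_i^2y_j\ge 6\sum_{i<j<\ell}y_iy_jy_\ell$, and the identity $k(k+1)p_3-(2k+1)p_1p_2+p_1^3=\sum_i y_i(ky_i-p_1)((k+1)y_i-p_1)$ are all correct; and once $\phi(\Delta)\subseteq K$ is established, the convex-combination argument does finish the proof.

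However, there is a genuine gap in the crucial lower-chord inequalities. Your Lagrange-multiplier reduction correctly shows that the minimum of $\sum_i h(y_i)$ on $\Delta$ (with $h(t)=t(kt-1)((k+1)t-1)$) occurs at a point with at most two distinct \emph{positive} coordinate values, and your one-value evaluation $(k-m)(k+1-m)/m^2\ge 0$ is right. But the two-value case is only asserted to ``follow by elementary factorization'' and is not carried out, and it does not in fact factor cleanly in general. Writing the two-value sum as a binary cubic, one finds
\[
ia^3(i-k)(i-k-1)+jb^3(j-k)(j-k-1)+ij\,a^2b\,(3i-2k-1)+ij\,ab^2\,(3j-2k-1),
\]
and for instance $(i,j,k)=(3,1,2)$ gives $2b(b^2-3ab+6a^2)$, which is nonnegative only because the quadratic factor has negative discriminant, not because it is a square or a product of linear forms in $a,b$; for $(i,j,k)=(4,1,2)$ the cubic $8a^3+2b^3+28a^2b-8ab^2$ has a negative mixed coefficient and needs a separate AM--GM estimate. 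So the cases proliferate and no single factorization covers them. You also implicitly take the multiplicities to be $(i,n-i)$, but the Lagrange reduction on the boundary of $\Delta$ only forces $i+j\le n$, so every pair $(i,j)$ with $i+j\le n$ must be treated. Finally, invoking ``Timofte's half-degree principle in disguise'' at this step is essentially assuming a result that subsumes the theorem you are trying to prove, so to make the argument self-contained you would need to actually verify the (now two-parameter) family of binary-cubic inequalities. Until that verification is supplied the proof is incomplete.
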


\begin{theorem*}[Timofte's half-degree principle \cite{Ti},\cite{R}] An $n$-variate symmetric polynomial of degree $2d$ is nonnegative if and only if it is so at every point with at most $\max\{2,d\}$ distinct components. 
\end{theorem*}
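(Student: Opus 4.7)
The plan is to prove the contrapositive via a Lagrange multiplier analysis on invariant level sets: if $F$ takes a negative value on $\R^n$, I would show it already takes a negative value at some point with at most $\max\{2,d\}$ distinct components. The key compactifying device is to fix the first $d$ power sums. For $d\geq 2$, each level set $B_{a_1,\ldots,a_d}=\{x\in\R^n\colon p_i(x)=a_i,\ i=1,\ldots,d\}$ is compact (because $p_2$ is among those fixed), so $F$ attains its minimum on $B_{a_1,\ldots,a_d}$, and it suffices to show that this minimum is attained at a point with at most $\max\{2,d\}$ distinct components.

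Let $x^\ast\in B_{a_1,\ldots,a_d}$ be a minimizer with $k$ distinct components $y_1,\ldots,y_k$. Writing $F=G(p_1,\ldots,p_{2d})$ as a polynomial in power sums and setting $c_j=\partial G/\partial p_j(x^\ast)$, the Lagrange condition reads
\[
\sum_{j=1}^{2d} j\,c_j\, x_i^{j-1} \;=\; \sum_{j=1}^{d} j\,\mu_j\, x_i^{j-1}\qquad(1\leq i\leq n),
\]
for suitable multipliers $\mu_j\in\R$. Hence each distinct component $y_l$ is a root of
\[
P(t)=\sum_{j=1}^{d} j(c_j-\mu_j)\,t^{j-1}+\sum_{j=d+1}^{2d} j\,c_j\,t^{j-1},
\]
and since $\deg P\leq 2d-1$, this gives the preliminary bound $k\leq 2d-1$.

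To sharpen this to $k\leq d$, I would exploit that when $k>d$ the fiber of $(p_1,\ldots,p_d)$ through $x^\ast$ inside the type stratum is $(k-d)$-dimensional, and that $F$ restricted to this fiber depends only on $p_{d+1},\ldots,p_{2d}$. A careful first- and second-order analysis along curves in this fiber should force a contradiction with the minimality of $x^\ast$: either one finds a direction in which $F$ decreases, or one finds a direction along which $F$ is constant and whose endpoint lies on a boundary stratum with strictly fewer distinct components, so that $x^\ast$ can be replaced by a point of smaller $k$ with $F$-value at most $F(x^\ast)$.

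The main obstacle will be executing this sharpening step: the naive Lagrangian count already uses up the \emph{easy} constraints, and the gap between $2d-1$ and $d$ reflects the Vandermonde structure of the moment map $x\mapsto(p_1(x),\ldots,p_{2d}(x))$. A clean way to handle this is probably via a moment-theoretic ingredient analogous to Gauss quadrature: given any atomic measure of mass $n$ on $\R$, there is another such measure supported on at most $d$ points matching the first $d$ moments, and along a continuous interpolation between the two measures one can control the value of $F$. Making this rigorous -- in particular handling the integrality constraints on multiplicities and the boundary case $d=1$ -- is the technical heart of Timofte's theorem.
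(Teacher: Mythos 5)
The paper does not prove this statement; it is quoted in the introduction as background and attributed to Timofte~\cite{Ti} and Riener~\cite{R}, so there is no proof in the paper to compare against. Evaluating your sketch on its own terms: the framework you set up is the right one and matches the literature. Fixing $p_1,\dots,p_d$ to get compact fibers (using that $p_2$ is among them, hence $d\geq 2$), applying Lagrange multipliers at a minimizer, and reading off that each distinct coordinate value is a root of a polynomial $P$ of degree at most $2d-1$ are all standard and correct steps. It is also the same circle of ideas (Lagrange multipliers on fibers of the invariant map) that the paper itself uses in Lemma~\ref{specialPt} and in the proof of Theorem~\ref{CI}.

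However, there is a genuine gap, and you explicitly flag it yourself: the passage from the preliminary bound $k\leq 2d-1$ to the claimed $k\leq\max\{2,d\}$ is exactly the content of Timofte's theorem, and your sketch does not supply it. The ``moment-theoretic ingredient analogous to Gauss quadrature'' you gesture at is indeed the right idea — in Riener's treatment one writes $F=A(p_1,\dots,p_d)+\sum_{j=d+1}^{2d}p_j\,B_j(p_1,\dots,p_d)$ (possible because no monomial of degree $\leq 2d$ can involve two power sums of index $>d$), observes that on a fixed fiber $F$ becomes an affine function of $(p_{d+1},\dots,p_{2d})$, and shows that the extreme points of the image of the fiber under this moment map are realized by points with at most $d$ distinct coordinates — but none of this is carried out or even precisely stated in your proposal. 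In addition, two secondary issues would need handling that the sketch glosses over: (i) the Lagrange count only constrains $k$ when $P\not\equiv 0$, and the degenerate case $P\equiv 0$ (where $\nabla F(x^\ast)$ lies in the span of $\nabla p_1,\dots,\nabla p_d$) must be treated separately, e.g.\ by minimizing a higher power sum on the fiber instead; and (ii) the case $d=1$, where the fiber is not compact and the bound is $2$ rather than $d$, requires its own argument. As it stands this is a correct plan of attack with the central lemma left unproved.
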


The purpose of this note is to provide witness sets for polynomials which are invariant under the action of a finite reflection group. Our main finding is that the locus of points where a $\W$-invariant form achieves its minimum must contain the real points of certain families of $\W$-invariant curves. These curves,  constructed using the classical theory of finite reflection groups, must in turn intersect certain simple subsets which, as a result, can be used as witness sets for nonnegativity. To give a more precise description of the results of the article we need to establish some notation (see Section~$\S\ref{S: Prelims}$ for additional background and definitions).

Let $V$ be a real euclidean vector space of dimension $n$ and let $\Phi\subseteq V$ be a root system. Let $\W$ be the finite reflection group defined by $\Phi$. By a Theorem of Chevalley (see Theorem~\ref{Chevalley}) the algebra of polynomial functions on $V$ which are invariant under $\W$ (i.e. polynomials $f$ such that $f\circ g^{-1}=f$ for all $g\in \W$) is generated by $n$ basic homogeneous invariants of degrees $1\leq d_1\leq \dots \leq d_n$. Our first result is the following existence theorem for codimension one witness sets,

\begin{theorem} \label{Thm A} Assume $|x|^2$ is an element of the algebra generated by the first $n-1$ invariants. If $F$ is a $\W$-invariant form of degree $2d< 2d_n$ then $F$ is nonnegative on $V$ if and only if $F$ is nonnegative on the set $\HH_{\W}$, defined as the union of the hyperplanes that are perpendicular to the elements of $\Phi$.
\end{theorem}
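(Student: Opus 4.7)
The plan is to argue by contradiction. Suppose $F$ is nonnegative on $\HH_{\W}$ but not on $V$; then by homogeneity $F$ attains a strictly negative minimum on the unit sphere $S=\{x\in V : |x|^2=1\}$ at some point $x_0$. The goal is to exhibit a global minimizer on $\HH_{\W}$, yielding the contradiction.

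The first step is to use Chevalley's theorem to write $F=P(f_1,\dots,f_n)$ for a polynomial $P$ of weighted degree $2d$ (with $f_i$ of weight $d_i$). Because $2d<2d_n$, the generator $f_n$ must appear at most linearly, so
\[
F = P_0(f_1,\dots,f_{n-1}) + f_n\cdot P_1(f_1,\dots,f_{n-1})
\]
for uniquely determined polynomials $P_0,P_1$. Next I would introduce the fiber
\[
C := \{x\in V : f_i(x)=f_i(x_0),\; i=1,\dots,n-1\}.
\]
The hypothesis that $|x|^2 \in \R[f_1,\dots,f_{n-1}]$ forces $|x|^2$ to be constant on $C$, so $C \subseteq S$; thus $C$ is a nonempty compact set containing $x_0$. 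On $C$ the function $F$ restricts to an affine function $A + B\cdot f_n|_C$, so it attains its minimum on $C$ at a point $y$ where $f_n|_C$ is extremal (with either sign, and any extremum serving if $B=0$). Since $x_0\in C\subseteq S$ is a global minimizer of $F$ on $S$, we conclude $F(y)=F(x_0)<0$.

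The final step is to verify $y\in \HH_{\W}$. At an extremum $y$ of $f_n|_C$, either the gradients $\nabla f_1(y),\dots,\nabla f_{n-1}(y)$ are linearly dependent, or else by the Lagrange multiplier theorem $\nabla f_n(y)$ lies in their linear span. In either case the Jacobian matrix $(\partial f_i/\partial x_j)(y)$ is singular. By the classical theorem of Steinberg, $\det(\partial f_i/\partial x_j)$ is a nonzero scalar multiple of $\prod_{\alpha\in\Phi^+}\alpha$, whose vanishing locus is precisely $\HH_{\W}$. Hence $y\in \HH_{\W}$ and $F(y)<0$, contradicting the hypothesis.

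The main obstacle I expect is the clean matching of the analytic condition (extremum on the fiber $C$, which may fail to be a smooth curve) with the geometric one (lying on a reflection hyperplane); this is resolved uniformly by Steinberg's identification of the singular locus of the orbit map $x\mapsto(f_1(x),\dots,f_n(x))$ with $\HH_{\W}$, which handles the regular and singular points of $C$ in one stroke. A secondary point that needs care is the assumption $|x|^2\in \R[f_1,\dots,f_{n-1}]$: without it the fiber $C$ can be unbounded and the compactness argument collapses.
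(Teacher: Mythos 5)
Your argument is correct, but it takes a genuinely different route from the paper's direct proof of Theorem~\ref{Thm A}. The paper sets $G := F - \mu|x|^{2d}$ (nonnegative on $V$), passes to the complexification, and applies Bezout's theorem to the projective curve $\hat{\mathcal{Z}}_y^{n-1}$ (shown in Lemma~\ref{Geom} to be reduced, arithmetically Cohen--Macaulay, of degree $\prod_{i<n}d_i$): if $G$ were nonzero on every component, each of the $|\W|$ real orbit points of $y$ would contribute intersection multiplicity at least $2$, giving $2d\prod_{i<n}d_i \geq 2|\W|$ and hence $2d \geq 2d_n$; so $G$ vanishes on the whole curve, and Lemma~\ref{specialPt} then supplies a point of $\HH_\W$ where $G=0$, i.e.\ $F<0$. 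Your decomposition $F = P_0 + \eta_n P_1$ and the observation that $F$ restricts to an affine function of $\eta_n$ on the compact fiber $C$ is, in substance, the argument the paper gives for the more general Theorem~\ref{CI} (and applies via Corollary~\ref{corCI}(1) with $g_i=\eta_i$); your extremum $y$ of $\eta_n|_C$ matches the paper's case split on $\overline{B}=0$ versus $\overline{B}\neq 0$, and when $B\neq 0$ you may simply take $y=x_0$, which is already an extremum of $\eta_n|_C$, and apply Lagrange there. Each route has a payoff: the Bezout argument also yields the stated corollary that a $\W$-invariant hypersurface of degree $<2d_n$ singular at a $\W$-general point must contain the entire curve $\mathcal{C}_y$, while your affine-restriction argument avoids complex algebraic geometry and extends verbatim to arbitrary graded regular sequences. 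One small misattribution: the factorization of the Jacobian determinant into $\prod_{\alpha\in\Phi^+}l_\alpha$ is Lemma~\ref{fact1} (Humphreys \S 3.13); what the paper actually cites from Steinberg is the finer rank statement used for Corollary~\ref{highCodim}.
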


If $\Phi:=\{e_i-e_j:1\leq i\neq j\leq n\}\subseteq \RR^n$ then the group $\W$ is the permutation group in $n$ letters and acts on the orthogonal complement of the subspace generated by $e_1+\dots+e_n$. This group with the action on that $n-1$ dimensional subspace is called $A_{n-1}$. Its basic invariants have degrees $2,\dots, n$ and Theorem~\ref{Thm A} says that a symmetric form $F$ of degree $2d<2n$ is nonnegative if and only if it is so on the hyperplanes $x_i-x_j=0$ (i.e. if and only if $F$ is nonnegative at all points with at most $n-1$ distinct components). If $\Phi=\{\pm e_i: 1\leq i \leq n\}\cup \{\pm e_i\pm e_j:1\leq i\neq j\leq n\}\subseteq \mathbb{R}^n$ then $\W$ is the Weyl group of $B_n$ acting on the polynomial ring in $n$ variables by signed permutations. The degrees of the basic invariants of $\W$ are $2,4,\dots, 2n$. In this case Theorem~\ref{Thm A} says that an even symmetric form of degree $2d<4n$ is nonnegative if and only if it is so on the hyperplanes defined by $x_i=0$ and by $\pm x_i\pm x_j=0$ for $1\leq i\neq j\leq n$. The statement of Theorem~\ref{Thm A} can be made more explicit by knowing the degrees of the basic invariants.  Table~\ref{Table1} contains these degrees for the reflection groups $\W$ generated by an irreducible root system (see~\cite[Chapter 2]{H} for a proof of the classification of finite reflection groups and for explicit descriptions of the roots in each case).

Next, it is natural to ask whether the degree bound in Theorem~\ref{Thm A} is sharp. We prove that this is the case when $\W$ is a reflection group all of whose basic invariants have even degree (i.e. for those reflection groups containing multiplication by $-1$). More generally, we prove the following existence theorem for $\W$-invariant forms for which the set $\H_\W$ is {\it not} a witness set for nonnegativity. 

\begin{theorem} \label{Thm B} Let $\underline{o}$ and $\overline{o}$ be the smallest and largest odd degrees of basic invariants for $\W$ and define $\underline{o}=\overline{o}=1$ if all invariants of $\W$ have even degree. If $2d\geq \max(2d_n, 2(\underline{o}+\overline{o}))$ then there exists a $\W$-invariant form $F$ of degree $2d$ which is nonnegative on $\H_\W$ and which is negative at some point of $V$.
\end{theorem}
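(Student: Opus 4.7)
The approach is to construct $F$ explicitly via a quadratic template in a distinguished $\W$-invariant, building on the structural observation underlying Theorem~A. Since the Jacobian $\det(\partial f_i/\partial x_j)$ agrees with $\Delta$ up to a nonzero scalar, on a generic fiber of $\phi := (f_1,\ldots,f_{n-1}) : V \to \RR^{n-1}$ the basic invariant $f_n$ attains its fiberwise extreme values $g_{\min}(c),g_{\max}(c)$ at points of $\H_\W$. For degree strictly below $2d_n$, $f_n$ can appear only linearly in the expression of $F$ in basic invariants, so the fiberwise minimum of $F$ is necessarily at a fiber endpoint---this is the content of Theorem~A. At degree $2d_n$ and above, the square $f_n^{2}$ is available, and the strategy is to design the fiberwise quadratic so that its minimum falls strictly into the interior $(g_{\min}(c),g_{\max}(c))$ on a suitable open set of $c$.

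\textbf{Construction.} In the case $-1\in \W$ (where $\underline{o}=\overline{o}=1$ and the bound equals $2d_n$) I would look for $F$ of degree $2d_n$ of the form
\[
F \;=\; \bigl(f_n-\mu(f_1,\ldots,f_{n-1})\bigr)^{2}-\epsilon^{2}\,\psi(f_1,\ldots,f_{n-1}),
\]
with $\mu,\psi$ polynomial $\W$-invariants of weighted degrees $d_n$ and $2d_n$, and $\epsilon>0$ small. The roots in $t=f_n$ of the fiberwise quadratic are $\mu(c)\pm\epsilon\sqrt{\psi(c)}$; choosing $\mu$ so that $\mu(c)\in(g_{\min}(c),g_{\max}(c))$ on a nonempty open subset of $\phi(V)$, together with $\psi\geq 0$ vanishing near the degenerate boundary $\{g_{\min}=g_{\max}\}$, makes both roots land strictly inside the fiber range on that open set for small $\epsilon$, producing $F<0$ at interior fiber points; at every fiber endpoint the square $(f_n-\mu)^{2}$ dominates $\epsilon^{2}\psi$, so $F\geq 0$ on $\H_\W$. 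Since the Jacobian argument does not use $-1\in\W$, the same template also handles $-1\notin\W$ whenever $2d_n\geq 2(\underline{o}+\overline{o})$. When instead $2d_n<2(\underline{o}+\overline{o})$, I apply the construction above at the lower degree $2d_n$ and multiply by the nonnegative $\W$-invariant $f_1^{(\underline{o}+\overline{o})-d_n}=|x|^{2((\underline{o}+\overline{o})-d_n)}$, which vanishes only at the origin and hence preserves both the sign on $\H_\W$ and the strict negativity at the witnessing point, reaching the required base degree. For arbitrary $2d$ exceeding the base degree one iterates this multiplication by further powers of $f_1$.

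\textbf{Main obstacle.} The principal difficulty is producing the polynomial invariant $\mu$ of weighted degree $d_n$ with $\mu(c)\in(g_{\min}(c),g_{\max}(c))$ on an open set: the bounding functions $g_{\min},g_{\max}$ are in general only real-algebraic in $c$, not polynomial, so a polynomial invariant of the prescribed weighted degree need not sandwich itself strictly between them. One must therefore either exhibit $\mu$ explicitly (feasible for the classical families, e.g.\ types $B_n$ or $H_n$, by direct computation on the fiber parametrisation) or argue existence by a dimension count inside the finite-dimensional space of invariants of weighted degree $d_n$ combined with a semialgebraic analysis of the image $\phi(V)$, showing that polynomial invariants of this weighted degree span sufficiently many ``directions'' to reach into the interior stratum of the orbit space. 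A secondary subtlety is that in the $-1\notin\W$ situation $f_n$ can have odd degree, so the degree-$d_n$ piece of the invariant ring available to $\mu$ may be severely restricted (possibly forcing $\mu=0$), which is precisely why the bound must be enlarged to $2(\underline{o}+\overline{o})$ in that case.
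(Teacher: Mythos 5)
Your proposal is a genuinely different construction from the paper's, but it has a real gap that you partially acknowledge and that the paper's argument sidesteps entirely.

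The paper's construction does not attempt a fiberwise quadratic template. Instead it fixes a $\W$-general point $y$ on the unit sphere with all basic invariants nonzero, takes $X=\operatorname{Cone}(\W y)$, writes down an explicit ideal $J=(p_1,\dots,p_n)$ with $V_\RR(J)=X$ whose generators have degree at most $\max(d_n,\underline{o}+\overline{o})$ (using the correction $p_j=\eta_j\eta_{\underline{o}}-\overline{\eta_j}\overline{\eta_{\underline{o}}}|x|^{d_j+\underline{o}}$ on the odd-degree invariants to keep everything of even degree), forms a $\W$-invariant sum of squares $\phi=\sum h_i^2$ of degree-$\beta$ generators of $J$ with $V_\RR(\phi)=X$, and then sets $\overline{\phi}=\phi-\tfrac{\mu}{2}|x|^{2\beta}$ with $\mu=\min_{\H_\W\cap S}\phi>0$. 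Because $X$ is disjoint from $\H_\W$, this $\mu$ is strictly positive, so $\overline{\phi}$ is automatically nonnegative on $\H_\W$ and negative at $y$. No sandwiching of a polynomial between algebraic boundary functions is needed.

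The central gap in your approach is exactly the one you flag as the ``main obstacle,'' and it is not merely a technicality: you need a polynomial invariant $\mu(f_1,\dots,f_{n-1})$ of weighted degree $d_n$ whose value lies strictly between $g_{\min}(c)$ and $g_{\max}(c)$ on an open set, together with a nonnegative invariant $\psi$ vanishing fast enough near $\{g_{\min}=g_{\max}\}$ so that $(f_n-\mu)^2\geq \epsilon^2\psi$ on $\H_\W$. Neither existence claim is established, and the second claim hides a further unaddressed point: the fiber of $(f_1,\dots,f_{n-1})$ may meet $\H_\W$ at points where $f_n$ takes interior values, not only at the fiberwise extrema $g_{\min},g_{\max}$ (Lagrange multipliers give that fiberwise critical points of $f_n$ lie in $\H_\W$, not the converse), so ``$F\geq 0$ at fiber endpoints'' does not by itself give ``$F\geq 0$ on $\H_\W$.'' Finally, in the case $2d_n<2(\underline{o}+\overline{o})$ your remedy of multiplying by powers of $|x|^2$ presupposes that the base construction succeeds at degree $2d_n$, but you yourself note that $\mu$ may be forced to be zero there; you never explain how to run the template at $2(\underline{o}+\overline{o})$ when that happens. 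The paper's ideal-theoretic construction is designed precisely to handle the odd-degree invariants without needing any such $\mu$, which is why it closes cleanly at the claimed bound.
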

In the case of irreducible reflection groups the bounds from the previous two Theorems can be computed explicity and we do so in Table~\ref{Table1}. Theorem~\ref{Thm B} implies that Theorem~\ref{Thm A} is sharp whenever the entries of the last two columns of Table~\ref{Table1} agree.
\begin{table}
\begin{center}
\caption{Degrees of basic invariants of irreducible reflection groups.}
\label{Table1}
\begin{tabular}{|c|l|l|l|}
\hline
Root System & Degrees $d_i$ & $2d_n$ & $\max(2d_n, 2(\underline{o}+\overline{o}))$\\
\hline
$A_n$ & $2,3,\dots, n+1$ & $2(n+1)$ &  $\begin{cases} 2(n+3)\text{, $n$ odd}\\ 2(n+4)\text{, $n$ even.}\\ \end{cases}$ \\
\hline
$B_n$ & $2,4,6,\dots, 2n$ & $4n$ & $4n$\\
\hline
$D_n$ & $2,4,6,\dots, 2n-2$,$n$ & $4n-4$ & $\begin{cases} 4n\text{, $n$ odd}\\ 4n-4\text{, $n$ even.}\end{cases}$\\ 
\hline
$E_6$ & $2,5,6,8,9,12$ & $24$ & $28$\\
\hline
$E_7$ & $2,6,8,10,12,14,18$ & $36$ & $36$\\
\hline
$E_8$ & $2,8,12,14,18,20,24,30$ & $60$ & $60$\\
\hline
$F_4$ &  $2,6,8,12$ & $24$  & $24$ \\
\hline
$G_2$ & $2,6$ & $12$ & $12$ \\
\hline
$H_3$ & $2,6,10$ & $20$ & $20$\\
\hline
$H_4$ & $2,12,20,30$ & $60$ & $60$ \\
\hline
$I_2(m)$ & $2,m$ & $2m$ & $\begin{cases} 4m\text{, $m$ odd}\\ 2m\text{, $m$ even.}\end{cases}$\\
\hline
\end{tabular}\\
\end{center}
\smallskip
\end{table}

By Theorem~\ref{Thm B} we know that the set $\H_\W$ above is not a witness set for {\it all} forms of sufficiently large degree. It is therefore natural to ask whether some subspaces of forms of higher degree have a natural nonnegativity witness set. Our next Theorem gives many such families of ``sparse" forms in the more general context of polynomials on forms, 

\begin{theorem}\label{CI} Suppose $g_1,\dots, g_n$ are a sequence of homogeneous forms in $\RR[x_1,\dots, x_n]$ and let $j$ be an integer such that $|x|^2\in \RR[g_1,\dots, g_{j}]$. If $F$ is a homogeneous polynomial of the form
\[F=A(g_1,\dots, g_j)+g_{j+1}B(g_1,\dots, g_j)\] 
for some polynomials $A,B \in \RR[z_1,\dots, z_j]$ then $F$ is nonnegative on $\mathbb{R}^n$ if and only if it is nonnegative on the algebraic set defined by the maximal minors of the matrix with columns $\nabla g_1,\dots ,\nabla g_{j+1}$.
\end{theorem}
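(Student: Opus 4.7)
The plan is to prove the non-trivial direction by contradiction. Suppose $F \geq 0$ on the algebraic set $X \subseteq \RR^n$ cut out by the $(j+1)\times(j+1)$ minors of $J := [\nabla g_1 \mid \cdots \mid \nabla g_{j+1}]$, yet $F(x_0) < 0$ for some $x_0$. Since $F$ is homogeneous and $X$ is a cone, $F$ attains a strictly negative minimum on the compact unit sphere $S := \{|x|^2 = 1\}$, say at $p \in S$. The goal is to produce $q \in X$ with $F(q) < 0$, contradicting the hypothesis.

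Write $g := (g_1,\ldots,g_j)$ and choose $P \in \RR[z_1,\ldots,z_j]$ with $|x|^2 = P(g_1(x),\ldots,g_j(x))$. By the chain rule $2p = \sum_{i=1}^{j} (\partial P/\partial z_i)(g(p))\,\nabla g_i(p)$, while a direct differentiation of $F$ gives
\[
\nabla F = \sum_{i=1}^{j}\Bigl(\frac{\partial A}{\partial z_i}(g) + g_{j+1}\frac{\partial B}{\partial z_i}(g)\Bigr)\nabla g_i + B(g)\,\nabla g_{j+1}.
\]
Since $p$ is a critical point of $F|_S$, Lagrange multipliers yield $\nabla F(p) = 2\lambda p$ for some $\lambda \in \RR$; comparing these two expressions at $p$ shows that $B(g(p))\,\nabla g_{j+1}(p)$ lies in the $\RR$-span of $\nabla g_1(p),\ldots,\nabla g_j(p)$. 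If $B(g(p)) \neq 0$, dividing immediately yields linear dependence of $\nabla g_1(p),\ldots,\nabla g_{j+1}(p)$, so $p \in X$, contradicting $F(p) < 0$.

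The remaining case $B(g(p)) = 0$ is the main obstacle: the Lagrange relation no longer constrains $\nabla g_{j+1}$ at $p$, so the pointwise argument collapses. The plan is to pass to the entire fiber $Y := \{x \in \RR^n : g_i(x) = g_i(p),\ 1 \leq i \leq j\}$. The identity $|x|^2 = P(g(x))$ forces $Y \subseteq S$, so $Y$ is compact, and the vanishing of $B(g(p))$ shows that $F \equiv F(p) < 0$ identically on $Y$. Let $q \in Y$ be a point at which the continuous function $g_{j+1}|_Y$ attains its maximum. If $q$ is a smooth point of $Y$, Lagrange multipliers on $Y$ express $\nabla g_{j+1}(q)$ as an $\RR$-linear combination of $\nabla g_1(q),\ldots,\nabla g_j(q)$; if $q$ is a singular point of $Y$, then $\nabla g_1(q),\ldots,\nabla g_j(q)$ are already linearly dependent. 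Either way the $j+1$ gradients at $q$ are dependent, so $q \in X$ with $F(q) = F(p) < 0$, giving the desired contradiction. Compactness of $Y$ is where the hypothesis $|x|^2 \in \RR[g_1,\ldots,g_j]$ is used a second time.
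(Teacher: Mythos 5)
Your proof is correct and takes essentially the same route as the paper's: pick a global minimizer $p$ on the (compact, $g$-level) unit sphere, case-split on whether $B(g(p))$ vanishes, apply Lagrange multipliers to force a rank drop in $(\nabla g_1,\dots,\nabla g_{j+1})$, and in the degenerate case pass to the compact fiber $Y=\{g_i=g_i(p)\}$ and extremize $g_{j+1}$ there. The only technical difference is cosmetic: you apply the multiplier argument directly to $F$ on the sphere and combine $2p=\sum_i(\partial P/\partial z_i)(g(p))\nabla g_i(p)$ with the chain-rule expansion of $\nabla F$, whereas the paper first forms the nonnegative auxiliary $G=F-\mu|x|^{2d}$ and applies Lagrange multipliers to its restriction $\phi$ on the fiber $\mathcal{Z}$; both calculations land on the same conclusion.
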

\begin{cor}\label{corCI} Assume $g_1,\dots, g_n$ is a graded regular sequence in $\RR[x_1,\dots, x_n]$ of degrees $d_1\leq \dots\leq d_n$ and let $\mathcal{S}$ be the algebraic set defined by the determinant of the matrix with columns $\nabla g_1,\dots, \nabla g_n$. The set $\mathcal{S}$ is a witness set for nonnegativity of a form $F\in  \RR[g_1,\dots, g_n]$ if either
\begin{enumerate}
\item $|x|^2\in \RR[g_1,\dots, g_{n-1}]$ and $\deg(F)<2d_n$ or 
\item $F$ depends algebraically on no more than $n-1$ of the $g_i$ and the algebra generated by these $g_i$ contains $|x|^2$.  
\end{enumerate}
\end{cor}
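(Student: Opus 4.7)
The plan is to reduce both cases of the corollary to Theorem~\ref{CI}. The key preliminary fact is that a graded regular sequence of $n$ homogeneous forms in $\RR[x_1,\dots,x_n]$ is algebraically independent: this is a standard Krull-dimension computation, since the map $\RR[z_1,\dots,z_n]\to \RR[x_1,\dots,x_n]$ sending $z_i\mapsto g_i$ makes the target a finite module over the source, forcing both rings to have the same Krull dimension $n$. Consequently every $F\in \RR[g_1,\dots,g_n]$ has a unique representation as a polynomial in the $g_i$, and comparing degrees in this representation will do most of the work.

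For part (1), I use this unique representation to expand $F=\sum_{\alpha}c_\alpha g_1^{\alpha_1}\cdots g_n^{\alpha_n}$. Since $g_i$ is homogeneous of degree $d_i$, each monomial has degree $\sum_i\alpha_id_i$. The hypothesis $\deg F<2d_n$ forces $\alpha_n\in\{0,1\}$ for every monomial that occurs, so grouping terms yields
\[F=A(g_1,\dots,g_{n-1})+g_n\,B(g_1,\dots,g_{n-1}).\]
Because $|x|^2\in \RR[g_1,\dots,g_{n-1}]$, Theorem~\ref{CI} applies with $j=n-1$ and $g_{j+1}=g_n$. The matrix of gradients is then square, its unique maximal minor is the determinant, and the witness set produced by the theorem coincides with $\mathcal{S}$.

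For part (2), I reindex $g_1,\dots,g_n$ so that the $k\le n-1$ forms on which $F$ depends become $g_1,\dots,g_k$; this reindexing changes the determinant by at most a sign, so $\mathcal{S}$ is unchanged. Now $F=A(g_1,\dots,g_k)$ trivially fits the template of Theorem~\ref{CI} with $j=k$ (taking $B=0$), and by hypothesis $|x|^2\in \RR[g_1,\dots,g_k]$. The theorem therefore produces a witness set $\mathcal{S}'$ cut out by the maximal minors of the $n\times(k+1)$ submatrix $[\nabla g_1\mid\cdots\mid \nabla g_{k+1}]$. The remaining step is the inclusion $\mathcal{S}'\subseteq \mathcal{S}$: if this submatrix has rank less than $k+1$ at a point $x$, then the full matrix $[\nabla g_1\mid\cdots\mid\nabla g_n]$ has rank less than $n$ at $x$, so its determinant vanishes and $x\in\mathcal{S}$. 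Hence nonnegativity on $\mathcal{S}$ implies nonnegativity on $\mathcal{S}'$, which by Theorem~\ref{CI} implies nonnegativity on all of $\RR^n$. The only genuine ingredient beyond the already-proved theorem is algebraic independence of the regular sequence, which is entirely standard; both cases are otherwise routine bookkeeping.
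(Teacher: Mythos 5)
Your proposal is correct and follows the same route as the paper: in part (1) you use the algebraic independence of a length-$n$ graded regular sequence to get a unique polynomial representation of $F$ in the $g_i$, bound the $z_n$-degree by $1$ from $\deg F < 2d_n$, and invoke Theorem~\ref{CI} with $j=n-1$, exactly as the paper does. For part (2) the paper merely says ``follows immediately from Theorem~\ref{CI}''; you supply the small missing observation that the set $\mathcal{S}'$ produced by the theorem (vanishing of the maximal minors of the $n\times(k+1)$ gradient submatrix) is contained in $\mathcal{S}$, so that nonnegativity on the larger set $\mathcal{S}$ implies nonnegativity on $\mathcal{S}'$ and hence, by the theorem, on all of $\RR^n$ — a correct and worthwhile clarification, but not a different method.
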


By the Theorem of Chevalley, the basic invariants of a finite reflection group form a regular sequence and we can apply the Corollary above, obtaining a generalization of Theorem~\ref{Thm A}. For groups $\W$ which satisfy an additional ``minor factorization condition" (Definition~\ref{DefMF}) Theorem~\ref{CI} leads to higher codimension witness sets (see Corollary~\ref{highCodim}).

For the special case of polynomials invariant under the permutation group $S_n$, the previous Theorems are weaker than Timofte's remarkable half-degree principle~\cite{Ti},\cite{R}. It is natural to ask whether there is a generalization of the half-degree principle for all finite reflection groups. We conclude by proposing the following conjectural half-degree principle, which is a natural generalization of Theorem~\ref{Thm A}.

\begin{conjecture} \label{ABetter} Let $j\geq 1$ be an integer. Assume $|x|^2$ is in the algebra generated by the basic invariants $\eta_1,\dots, \eta_j$. If $F$ is a $\W$-invariant form of degree $2d$ with $d< d_{j}$ then $F$ is nonnegative on $V$ if and only if $F$ is nonnegative on the union of all subspaces obtained by intersecting $n-j+1$ linearly independent hyperplanes orthogonal to the elements of the root system $\Phi$ which defines $\W$.
\end{conjecture}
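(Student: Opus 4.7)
The plan is to generalize the level-curve strategy underlying Theorem~\ref{Thm A} to higher-dimensional $\W$-invariant level varieties. By Chevalley's theorem and the degree bound $2d < 2d_j \leq \deg\eta_j \leq \cdots \leq \deg\eta_n$, every monomial in $\eta_1, \ldots, \eta_n$ of degree $2d$ uses at most one factor from $\{\eta_j, \ldots, \eta_n\}$, so $F$ decomposes as
\[
F = A(\eta_1, \ldots, \eta_{j-1}) + \sum_{i=j}^{n} \eta_i\, B_i(\eta_1, \ldots, \eta_{j-1})
\]
for real polynomials $A, B_j, \ldots, B_n$. Suppose for contradiction that $F$ is nonnegative on the witness set but $F(p) < 0$ at some $p \in V$; rescale so that $|p|=1$ and $p$ minimizes $F$ on $S^{n-1}$. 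Define
\[
M := \{x \in V : \eta_i(x) = \eta_i(p),\ i = 1, \ldots, j-1\} \cap S^{n-1}.
\]
By the hypothesis $|x|^2 \in \mathbb{R}[\eta_1, \ldots, \eta_j]$, fixing the first $j-1$ invariants together with $|x|^2$ constrains $\eta_j$ to finitely many real values, so $M$ is a compact $\W$-invariant finite union of level sets of $\eta_1, \ldots, \eta_j$, each of dimension $n-j$. On $M$, the restriction $F|_M = A(c) + \sum_{i=j}^n b_i\, \eta_i|_M$ is an affine function of $\eta_j|_M, \ldots, \eta_n|_M$ with fixed coefficients $b_i = B_i(c)$.

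The next step is to extract a point $q \in M$ with $F(q) \leq F(p) < 0$ that lies on a deep stratum of the reflection arrangement by an iterated Lagrange multiplier argument. Iteratively restrict to the extremum loci of $\eta_{j+1}, \eta_{j+2}, \ldots, \eta_n$ within the closed set of minimizers of $F|_M$; the affine structure of $F|_M$ ensures that each such passage cannot increase $F$. Because $2x = \nabla|x|^2 \in \mathrm{span}(\nabla\eta_1, \ldots, \nabla\eta_j)$ throughout by hypothesis, the Lagrange condition at each step places the new gradient into the span of those previously encountered, culminating at $q \in M$ with $F(q) \leq F(p) < 0$ and
\[
\nabla\eta_{j+1}(q), \ldots, \nabla\eta_n(q) \in \mathrm{span}\bigl(\nabla\eta_1(q), \ldots, \nabla\eta_j(q)\bigr).
\]
Via the classical correspondence (from the factorization of the Chevalley Jacobian as a product of root linear forms, applied locally to the parabolic subgroup fixing $q$) between the Jacobian rank at $q$ and the dimension of the flat of the arrangement through $q$, this places $q$ in a flat of dimension at most $j$.

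To drop one further dimension so that $q$ lies in the witness set (flats of dimension at most $j-1$), one exploits an additional Lagrange condition coming from the fact that $q$ is itself a critical point of $F|_{S^{n-1}}$ (since it arises as an extremum of a chain of functions on a sphere-bound set containing a global minimizer of $F$). Specifically, the condition $\nabla F(q) = 2\mu q$ with $\mu = d\, F(q) \neq 0$ (by Euler's identity), combined with the expansion of $\nabla F$ using the decomposition of $F$ above and the expansion $2q = \sum_{i=1}^{j} Q_i(q)\,\nabla\eta_i(q)$, yields a linear dependency
\[
\sum_{i=1}^{j-1} (A_i - \mu Q_i)(q)\, \nabla\eta_i(q) + \sum_{i=j}^{n}\bigl(B_i(c) - \mu\,\delta_{ij} Q_j(q)\bigr)\, \nabla\eta_i(q) = 0
\]
among $\nabla\eta_1(q), \ldots, \nabla\eta_n(q)$. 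Provided this dependency is nontrivial, it drops the Jacobian rank at $q$ to at most $j-1$, placing $q$ in the witness set and contradicting $F(q) < 0$.

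The main obstacle is the final rank-drop step: ruling out the degenerate case in which the above Lagrange relation reduces to one already encoded by the iteration (so that no new dependency is obtained). This appears to require either a stratification argument showing that such degenerate critical points $q$ already belong to deeper strata of the arrangement, or a small $\W$-equivariant perturbation of $F$ preserving its sign pattern. A secondary technical issue requiring care is a complete proof of the rank-versus-flat-dimension correspondence at deep strata of the reflection arrangement, using the local product decomposition of the arrangement together with Chevalley's theorem applied to the parabolic reflection subgroups stabilizing each stratum.
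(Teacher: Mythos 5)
First and foremost: the statement you set out to prove is stated in the paper as Conjecture~\ref{ABetter}, not a theorem. The authors explicitly leave it open, remarking that a proof would subsume both Timofte's half-degree principle and the Choi--Lam--Reznick result on even symmetric sextics. So your attempt should be judged as an attack on an open problem rather than as a reconstruction of an argument that exists in the paper.

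Your opening decomposition $F = A(\eta_1,\ldots,\eta_{j-1}) + \sum_{i=j}^{n}\eta_i\,B_i(\eta_1,\ldots,\eta_{j-1})$ is correct and is the natural generalization of the decomposition used in the proof of Corollary~\ref{corCI}. But note that it is strictly more general than the setting in which the paper \emph{does} prove something (Corollary~\ref{highCodim}): there the authors restrict to $F\in\RR[\eta_1,\ldots,\eta_j]$ linear in $\eta_j$ and, crucially, also require the minor factorization property of Definition~\ref{DefMF}, which the paper itself only conjectures to hold for all reflection groups. Your attempt drops both hypotheses.

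The first substantial gap is in the iterated Lagrange-multiplier step. You claim to produce a single point $q$ at which \emph{all} of $\nabla\eta_{j+1}(q),\ldots,\nabla\eta_n(q)$ lie in $\mathrm{span}(\nabla\eta_1(q),\ldots,\nabla\eta_j(q))$. Each application of Lagrange multipliers gives you one linear relation at one new extremizer, and the point changes at every stage; the relation $\nabla\eta_i(q_{i-1})\in\mathrm{span}(\cdots)$ obtained at stage $i-1$ does not transfer to the new point $q_i$ produced at stage $i$. Nor do the stage-$i$ multipliers naturally produce a relation only among $\nabla\eta_1,\ldots,\nabla\eta_j$; naively they involve $\nabla\eta_1,\ldots,\nabla\eta_{i-1}$. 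At the end of the chain you can plausibly place one gradient in the span of all the others (recovering essentially Lemma~\ref{specialPt}, i.e.\ rank $\le n-1$ and $q\in\H_\W$), which is Theorem~\ref{Thm A}, but not the much stronger ``Jacobian rank $\le j$'' you need. To get a genuine rank bound you would need to show that a minimizer exists inside the closed algebraic locus where the Jacobian already has low rank, which is exactly the type of statement that requires something like Steinberg's theorem \cite{St} plus the minor-factorization property to control, and is where the paper's own partial result spends its hypotheses.

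The second gap is one you flag yourself: even granting rank $\le j$ at $q$, the final relation $\nabla F(q)=2\mu q$ need not be independent of the relations already encoded by the previous step. Until you can rule out this degeneracy (you sketch a stratification or $\W$-equivariant perturbation, but do not carry either out), you do not reach rank $\le j-1$, hence you do not land in the witness set of $(j-1)$-dimensional flats, and the contradiction fails. This is precisely the heart of the conjecture, and as it stands the argument is not a proof.

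A smaller issue worth flagging: the claim that $M$ is automatically a finite union of $(n-j)$-dimensional level sets of $\eta_1,\ldots,\eta_j$ silently assumes that $|x|^2$ genuinely depends on $\eta_j$; if $|x|^2\in\RR[\eta_1,\ldots,\eta_{j-1}]$, $\eta_j$ is unconstrained on $M$, and you should reduce $j$ first. This is easy to fix but should be stated.

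In summary, your strategy (reduce modulo the degree bound to an affine-in-$(\eta_j,\ldots,\eta_n)$ function on a compact invariant level set, then push a minimizer into a deep flat of the reflection arrangement by successive Lagrange conditions plus Steinberg's rank--stratum correspondence) is a reasonable and essentially the only known line of attack, and is consistent in spirit with how the paper proves Theorems~\ref{Thm A} and~\ref{CI} and Corollary~\ref{highCodim}. But it currently has two real holes, the second of which you acknowledge, and the conjecture remains open.
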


Proving Conjecture~\ref{ABetter} would imply both Timofte's half degree principle and the result of Choi, Lam and Reznick stated earlier on even symmetric sextics.

{\bf Acknowledgements} We wish to thank Grigoriy Blekherman, C\'esar Galindo, Mehdi Garrousian, Cordian Riener, Raman Sanyal and Gregory G. Smith for several insightful conversations during the completion of this project. M. Velasco was partially supported by the FAPA funds from Universidad de los Andes.

\section{Preliminaries}\label{S: Prelims}

\begin{definition} Let $V$ be a real vector space of dimension $n$ with a chosen positive definite bilinear form $\langle,\rangle$. The reflection across the hyperplane $H$ perpendicular to $v\in V$ is the element $T_H\in \GL(V)$ given by
\[ T_H(w)=w-2\frac{\langle w,v\rangle}{\langle v,v\rangle}\]
\end{definition}

\begin{definition} A root system $\Phi\subseteq V$ is a finite subset of vectors of $V$, called roots which satisfy the following properties,
\begin{enumerate}
\item{ The only multiples of $v\in \Phi$ which belong to $\Phi$ are $\{v,-v\}$.}
\item{ The inclusion $T_{H}(\Phi)\subseteq \Phi$ holds for any hyperplane $H$ perpendicular to an element of $\Phi$.}
\end{enumerate}
If moreover, $\forall x,y\in \Phi$  the real number $2\frac{\langle x,y\rangle}{\langle y,y\rangle}$ is an integer then the root system $\Phi$ is called crystallographic.
\end{definition}
\begin{definition}  The Reflection group $\W$ defined by a root system $\Phi$ is the subgroup of $\GL(V)$ generated by reflections across the hyperplanes perpendicular to the roots in $\Phi$. The rank of the group $\W$ is the dimension of the span of $\Phi$. A Weyl group is a reflection group defined by a crystallographic root system. \end{definition}

There is a complete classification of finite reflection groups (see~\cite[Chapter 2]{H}). Their most fundamental examples, pervasive throughout mathematics, are the Weyl groups. The classification of Weyl groups is as follows: there are three infinite one-parameter families $A_n, B_n, D_n$ for $n\in\mathbb{N}$ and five exceptional groups $E_6,E_7,E_8,F_4$ and $G_2$. The index $n$ always corresponds to the dimension of the vector space $V$ spanned by the corresponding root system. 

Let $S:=\Sym^{\bullet}(V^*)$ be the $\RR$-algebra of polynomial functions on $V$. Recall that $S$ is endowed with the contragradient action of $\W$, given by $w(f)(x):=f(w^{-1}x)$ and thus contains an invariant subalgebra $R:=S^{\W}\subseteq S$. The main theorem about invariants of finite reflection groups is the following Theorem (see~\cite[Chapter 3]{H} for a proof),
\begin{theorem}\label{Chevalley}[Chevalley] The algebra $R$ is generated by $n$ homogeneous, algebraically independent elements of positive degree. Moreover, $S$ is a free $R$-module of rank $|\W|$. 
\end{theorem}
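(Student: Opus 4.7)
The plan is to follow the classical Chevalley strategy, which produces basic invariants as a minimal generating set of an auxiliary ideal in $S$. Let $I\subset S$ be the ideal generated by all homogeneous elements of $R$ of strictly positive degree, and let $\eta_1,\dots,\eta_m$ be a minimal homogeneous generating set of $I$ (finite by the Hilbert basis theorem), chosen inside $R$ without loss of generality. The key auxiliary tool is the Reynolds operator $\pi:S\to R$ defined by $\pi(f)=\frac{1}{|\W|}\sum_{w\in\W} w(f)$, which is a degree-preserving $\RR$-linear projection onto $R$ satisfying $\pi(rf)=r\pi(f)$ for every $r\in R$.

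First I would prove $R=\RR[\eta_1,\dots,\eta_m]$ by induction on degree. Given a homogeneous $f\in R$ of positive degree, write $f=\sum h_i\eta_i$ with $h_i\in S$ homogeneous of degree $\deg(f)-\deg(\eta_i)<\deg(f)$; applying $\pi$ and using $\eta_i\in R$ yields $f=\sum \pi(h_i)\eta_i$ with $\pi(h_i)\in R$ of strictly smaller degree, so the induction closes. Next, since every $f\in S$ satisfies the monic polynomial $\prod_{w\in\W}(T-w(f))\in R[T]$, the ring $S$ is integral over $R$, the Krull dimensions agree, and the transcendence degree of $R$ over $\RR$ equals $n$; this forces $m\geq n$. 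The delicate step is to show $m=n$ together with algebraic independence of the $\eta_i$. Following Chevalley, I would establish the syzygy criterion that any homogeneous relation $\sum h_i\eta_i=0$ in $S$ forces each $h_i\in I$ (using minimality together with the Reynolds averaging), and then deduce that a nontrivial algebraic relation among the $\eta_i$ would, after differentiating and applying the syzygy criterion inductively, contradict the minimality of $\{\eta_i\}$ as generators of $I$. This algebraic independence/minimality argument is the main obstacle of the proof.

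Once algebraic independence is in hand, the module statement follows from standard commutative algebra: the polynomial ring $S$ is Cohen--Macaulay, and the algebraically independent homogeneous sequence $\eta_1,\dots,\eta_n$ is a system of parameters, hence a regular sequence, which makes $S$ a free $R$-module of finite rank. Finally, to compute the rank, I would pass to fraction fields. The finite group $\W$ acts faithfully on $\mathrm{Frac}(S)$ with fixed field $\mathrm{Frac}(S)^{\W}=\mathrm{Frac}(R)$, so by Artin's theorem $[\mathrm{Frac}(S):\mathrm{Frac}(R)]=|\W|$, and since $S\otimes_R\mathrm{Frac}(R)\cong\mathrm{Frac}(S)$ the free rank of $S$ as an $R$-module equals $|\W|$.
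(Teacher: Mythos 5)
The paper itself does not prove this theorem; it only states it and refers the reader to Humphreys \cite[Chapter 3]{H}, so there is no in-paper argument to compare against. Your sketch follows the classical Chevalley strategy, which is exactly the route taken in \cite{H}: Reynolds-operator induction to get $R=\RR[\eta_1,\dots,\eta_m]$, integrality of $S$ over $R$ to pin the transcendence degree to $n$, a syzygy-type lemma plus a differentiation argument for algebraic independence (hence $m=n$), and then the Cohen--Macaulay/system-of-parameters observation for freeness with the Galois-theoretic rank computation via $\mathrm{Frac}(S)^{\W}=\mathrm{Frac}(R)$ and Artin's theorem. The architecture and all of the outer steps are correct.

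Two details in the middle step, which you rightly flag as the crux, are stated a bit loosely and would need sharpening in a full write-up. First, the actual Chevalley lemma has a more delicate shape: if $f_1,\dots,f_r\in R_+$ are homogeneous with $f_1$ not in the $S$-ideal $(f_2,\dots,f_r)$, and $\sum g_i f_i=0$ with $g_i\in S$ homogeneous, then $g_1\in I$; the proof is an induction on $\deg g_1$ using that $(s-1)g$ is divisible by the linear form of the reflecting hyperplane for each reflection $s$, with Reynolds averaging only entering at the end. Your stated version, that every $h_i$ lands in $I$, is the corollary one gets for a minimal generating set. Second, differentiating a relation $H(\eta_1,\dots,\eta_m)=0$ gives $\sum_i P_i\,\partial\eta_i/\partial x_j=0$ with $P_i=(\partial H/\partial y_i)(\eta)\in R$, which is not of the form $\sum h_i\eta_i=0$; the lemma must be applied with a minimal subset of the $P_i$ (generating the $R$-ideal they span) in the role of the $f_i$ and the partials $\partial\eta_i/\partial x_j$ in the role of the $g_i$, after which one recombines using Euler's identity $\sum_j x_j\,\partial\eta_i/\partial x_j=d_i\eta_i$ to contradict minimality of the $\eta_i$. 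These are refinements to fill in, not errors that derail the approach.
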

Henceforth we will denote by $\eta_1,\dots, \eta_n$ a fixed set of homogeneous generators of $R$ of degrees $1\leq d_1\leq d_2\leq\dots \leq d_n$. It is well known (see~\cite[Section 3.9]{H}) that the degrees $d_i$ of these generators are uniquely determined by $\W$ and that they satisfy the equality 
\[\prod_{j=1}^n d_j=|\W|.\] 

A result of Shephard and Todd (see~\cite[Section 3.11]{H}) shows that the only finite subgroups of $\GL(V)$ whose ring of invariants is a polynomial algebra are generated by reflections so the Theorem of Chevalley is in fact a characterization of finite reflection groups.

\begin{definition} The Chevalley mapping of $\W$ is the morhism $\Psi: V\rightarrow \mathbb{A}^n$ which sends $x$ to $(\eta_1(x),\dots, \eta_n(x))$.
\end{definition}

We will often use the following factorization identity for the Jacobian determinant of the Chevalley mapping,

\begin{lemma}~\cite[Section 3.13]{H} \label{fact1}
For each root $\a\in\Phi$, define $l_{\a}$ to be a (nonzero) linear form that vanishes at the hyperplane orthogonal to $\alpha$. Then
\begin{align}
\nabla\e_1\wedge\ldots\wedge\nabla\e_n=\lambda \prod_{\a\in\Phi^+}{l_{\alpha}}
\end{align}
for some nonzero constant $\lambda\in \RR$. So $\nabla\e_1\wedge\ldots\wedge\nabla\e_n$ does not depend, up to multiplication by a nonzero constant, on the choice of basic invariants for $\W$. 
\end{lemma}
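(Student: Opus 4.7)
The plan is to interpret $J := \nabla\eta_1 \wedge \dots \wedge \nabla\eta_n$ as the Jacobian determinant $\det(\partial \eta_i/\partial x_j) \in S$, and to establish two things: (i) each $l_\alpha$ for $\alpha \in \Phi^+$ divides $J$; (ii) $\deg J = |\Phi^+| = \deg P$, where $P := \prod_{\alpha \in \Phi^+} l_\alpha$. Together with $J \neq 0$ (which follows from the algebraic independence of $\eta_1, \dots, \eta_n$ asserted by Chevalley's theorem), these two facts force $J = \lambda P$ for some nonzero constant $\lambda \in \RR$.

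For (i), I would differentiate the invariance identity $\eta_i \circ w = \eta_i$ for $w \in \W$ via the chain rule to obtain $D\Psi(wx) = D\Psi(x)\cdot w^{-1}$, and hence the anti-invariance $J(wx) = \det(w)\, J(x)$. Applied to the reflection $T_H$ across $H = \alpha^{\perp}$, which has determinant $-1$ and fixes $H$ pointwise, this forces $J|_H \equiv 0$, so the linear form $l_\alpha$ divides $J$ in $S$. Distinct positive roots yield pairwise non-proportional (hence coprime) linear forms in the UFD $S$, so the product $P$ divides $J$.

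For (ii), homogeneity of the $\eta_i$ gives $\deg J = \sum_{i=1}^n (d_i - 1)$, while $\deg P = |\Phi^+|$; these agree by the classical reflection-group identity $\sum (d_i - 1) = |\Phi^+|$, which is proved by comparing the Hilbert series of $S$ with that of $R$ using Chevalley's freeness theorem (the top degree of the coinvariant ring $S/(\eta_1,\dots,\eta_n)S$ equals $\sum(d_i-1)$ on the one hand, and is spanned by the sign-isotypic element $P$ of degree $|\Phi^+|$ on the other). Matching degrees, $J/P$ is a nonzero constant $\lambda$. Independence from the choice of basic invariants is a chain-rule exercise: a change of system $\eta_i' = P_i(\eta_1,\dots,\eta_n)$ replaces $J$ by $\det(\partial P_i/\partial \eta_j) \cdot J$, and this $\eta$-Jacobian is forced to be a nonzero scalar by matching total degrees on both sides. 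The main subtlety is invoking the identity $\sum(d_i - 1) = |\Phi^+|$, which is classical but nontrivial; alternatively one can argue that $P$ is itself an anti-invariant (since each $T_H$ negates exactly one factor of $P$), that the sign isotypic component of $S$ over $R$ is a free rank-one $R$-module generated by the minimal-degree anti-invariant, and that $J$ is such a minimal-degree anti-invariant, thereby sidestepping an explicit degree count.
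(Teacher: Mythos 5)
The paper does not prove this lemma itself; it cites it to Humphreys, Section~3.13. Your argument is essentially the classical one given there (and in Bourbaki): establish anti-invariance of the Jacobian $J$ to get $P \mid J$, then match degrees via $\sum(d_i-1) = |\Phi^+|$. That main line is correct and complete: the chain-rule derivation of $J(wx) = \det(w)\,J(x)$, the restriction to the fixed hyperplane of each reflection to get $l_\alpha \mid J$, the coprimality of the $l_\alpha$ for distinct positive roots, the degree count, and $J \neq 0$ from algebraic independence are all in order. The chain-rule argument for independence of the choice of basic invariants is also standard and correct.

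Two small imprecisions in your closing ``alternative'' route are worth flagging, though they do not affect the main proof. First, ``each $T_H$ negates exactly one factor of $P$'' is only literally true for reflections in simple roots (a general reflection $T_\alpha$ sends $\alpha\mapsto-\alpha$ but may send several other positive roots to negative roots); the correct statement is that $P\circ w = \det(w)\,P$, proved either by checking simple reflections (which do negate exactly one factor and permute the rest) or by counting sign changes $|w\Phi^+\cap\Phi^-| = \ell(w)$. Second, the claim that knowing ``$J$ is a minimal-degree anti-invariant'' sidesteps the degree identity is circular as stated: from $P\mid J$ and the rank-one structure of anti-invariants you only get $J = cP$ with $c\in R$, and concluding $c$ is a constant again requires $\deg J = \deg P$, i.e.\ exactly the identity $\sum(d_i-1)=|\Phi^+|$ (equivalently, that the top degree of the coinvariant algebra is $|\Phi^+|$). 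So the identity is not avoidable in that way; it must be invoked from the Poincar\'e polynomial of $\W$ (the longest element has length $|\Phi^+|$ and the Poincar\'e polynomial is $\prod(1-t^{d_i})/(1-t)$) or an equivalent source, as you do in your primary argument.
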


\section{Geometry of basic $\W$-invariant subvarieties}

\begin{definition} We say that a point $v\in V$ is $\W$-general if its $\W$-orbit has cardinality $|\W|$ (equivalently if its stabilizer subgroup is trivial). A point $v\in V$ is $\W$-special if it is not $\W$-general. The set of $\W$-special points is the union of all hyperplanes perpendicular to the roots. We denote this set by $\mathcal{H}_\W$. \end{definition}

By Lemma~\ref{fact1} the $\W$-general points can be characterized geometrically as the points where the Chevalley mapping $\Psi$ is a local dif and only ifeomorphism (i.e. the points where the Jacobian determinant of $\Psi$ does not vanish)

In this section we use $\W$-general points and the Chevalley mapping to construct certain families of $\W$-invariant subvarieties which we call basic. To study their geometry we first base-change to the algebraic closure. Let $\overline{V}=V\otimes_{\RR}\mathbb{C}$ be the complexification of $V$ and define $U=\mathbb{C}\times \overline{V}$ and let $\PP^n:=\PP(U)$ be the projective space over $\mathbb{C}$ with homogeneous coordinates $x_0,\dots, x_n$. The action of $\W$ on $V$ extends to an action of $\W$ on $\PP^n$ by fixing the $x_0$ coordinate.

\begin{definition} For $y\in V$ and $1\leq i\leq n$ let $\overline{\eta_i}=\eta_i(y)$. For any integer $k$ with $1\leq k\leq n$ let $\mathcal{Z}_y^k\subseteq U$ be the affine scheme defined by $(\eta_i-\overline{\eta_i}:1\leq i\leq k)$ and let $\hat{\mathcal{Z}}^k_y\subseteq \PP^n$ be the projective subscheme defined by $(\eta_i-\overline{\eta_i}x_0^{d_i}:1\leq i\leq k)$. 
\end{definition}

The following Lemma summarizes some basic geometric properties of the schemes $\mathcal{Z}_y^k$ and $\hat{\mathcal{Z}}_y^k$.

\begin{lemma} \label{Geom} The scheme $\hat{\mathcal{Z}}_y^k$ is $\W$-invariant, arithmetically Cohen-Macaulay of codimension $k$ and degree $\prod_{i=1}^kd_i$. In particular, $\hat{\mathcal{Z}}_y^k$ is unmixed and has no embedded components. 
Moreover, if $y$ is a $\W$-general point then $\hat{\mathcal{Z}}_y^k$ is reduced.
\end{lemma}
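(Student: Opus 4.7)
The approach is to derive all but the reducedness statement from the fact that $\hat{\mathcal{Z}}_y^k$ is a projective complete intersection, and to deduce reducedness for $\W$-general $y$ from étaleness of the Chevalley map $\Psi$ outside $\mathcal{H}_\W$. Writing $f_i := \eta_i - \overline{\eta_i}\,x_0^{d_i}$, the $\W$-invariance of $\hat{\mathcal{Z}}_y^k$ is immediate because each $f_i$ is a difference of $\W$-invariant forms. To see that $(f_1,\dots,f_k)$ is a regular sequence in $\tilde S := \mathbb{C}[x_0,x_1,\dots,x_n]$ I will bound $\dim V(f_1,\dots,f_k) \le n+1-k$ in $\mathbb{A}^{n+1}$; the opposite inequality is Krull's principal ideal theorem. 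The slice $x_0 = 0$ is $V(\eta_1,\dots,\eta_k) \subseteq \mathbb{A}^n$, of dimension $n-k$ by Theorem~\ref{Chevalley}. The chart $x_0 \neq 0$ is a cone over the fiber $\mathcal{Z}_y^k = \Psi^{-1}(\pi_k^{-1}(\overline{\eta_1},\dots,\overline{\eta_k}))$, where $\pi_k:\mathbb{A}^n\to\mathbb{A}^k$ is the projection onto the first $k$ coordinates; since $\Psi$ is finite surjective by Theorem~\ref{Chevalley}, this fiber has pure dimension $n-k$ and the cone has dimension $n-k+1$. So $(f_1,\dots,f_k)$ has codimension $k$ and is a regular sequence, which makes $\hat{\mathcal{Z}}_y^k$ arithmetically Cohen-Macaulay of codimension $k$. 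Bezout gives $\deg \hat{\mathcal{Z}}_y^k = \prod_{i=1}^k d_i$, and unmixedness together with the absence of embedded components follows from Cohen-Macaulayness.

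For reducedness when $y$ is $\W$-general, I first note that the part of $\hat{\mathcal{Z}}_y^k$ at infinity is $V(\eta_1,\dots,\eta_k) \subseteq \PP^{n-1}$ of dimension $n-1-k < n-k = \dim \hat{\mathcal{Z}}_y^k$. Every irreducible component therefore meets the affine chart $\mathcal{Z}_y^k$, and since a Cohen-Macaulay scheme is reduced as soon as it is generically smooth, it suffices to produce a smooth dense open subset of $\mathcal{Z}_y^k$. By Lemma~\ref{fact1} the Jacobian of $\Psi$ has full rank exactly on $V \setminus \mathcal{H}_\W$, so $\Psi$ is étale there and the composition $\Psi_k := \pi_k \circ \Psi$ is smooth there; hence $\mathcal{Z}_y^k \cap (V \setminus \mathcal{H}_\W)$ is smooth.

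It remains to show this open is dense, equivalently that no irreducible component $Z$ of $\mathcal{Z}_y^k$ is contained in $\mathcal{H}_\W$. Assume some $Z$ is. Finiteness of $\Psi$ gives $\dim \Psi(Z) = \dim Z = n-k$, and because $\Psi(Z)$ is an irreducible closed subset of $\pi_k^{-1}(\overline{\eta_1},\dots,\overline{\eta_k}) \cong \mathbb{A}^{n-k}$ of full dimension, equality $\Psi(Z) = \pi_k^{-1}(\overline{\eta_1},\dots,\overline{\eta_k})$ must hold. On the other hand $\Psi(Z) \subseteq \Psi(\mathcal{H}_\W) =: \Delta$, and $\Delta$ is a proper closed subset of $\mathbb{A}^n$ because $\Psi$ is étale on the nonempty open set $V \setminus \mathcal{H}_\W$. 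Combining these, the entire affine space $\pi_k^{-1}(\overline{\eta_1},\dots,\overline{\eta_k})$ would lie in $\Delta$; but $\Psi(y)$ is a point of this affine space and does not lie in $\Delta$, since $\W$-generality of $y$ gives $y \in V \setminus \mathcal{H}_\W$ and the full fiber $\Psi^{-1}(\Psi(y))$ is the free $\W$-orbit of $y$, which remains outside $\mathcal{H}_\W$. This contradiction shows $\mathcal{Z}_y^k$ is generically smooth, hence reduced, hence $\hat{\mathcal{Z}}_y^k$ is reduced as well. The delicate step is this last one: it relies both on finiteness of $\Psi$ (which forces $\Psi(Z)$ to fill the whole projection fiber) and on the existence of a distinguished fiber-point escaping the discriminant (the image of the $\W$-general $y$), and this is the only place where the $\W$-general hypothesis is actually used.
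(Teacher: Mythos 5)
Your proof is correct, and it reaches the same conclusions by a genuinely different route in the two non-trivial places. For the regular-sequence claim the paper works algebraically: since $x_0$ is a nonzero divisor and the $q_i$ reduce to $\eta_1,\dots,\eta_n$ modulo $x_0$ (a regular sequence by Chevalley), $x_0,q_1,\dots,q_n$ is a graded regular sequence, and one invokes the permutation-invariance of graded regular sequences to drop $x_0$; you instead bound $\dim V(f_1,\dots,f_k)$ geometrically by splitting into the slice $\{x_0=0\}$ and the cone over $\mathcal{Z}_y^k$ on $\{x_0\neq 0\}$, then cite Krull for the reverse inequality. Both are fine; yours needs finiteness of $\Psi$ up front, the paper's only needs the Chevalley regular sequence fact. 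For reducedness the paper argues \emph{forward}: $\Psi:\mathcal{Z}_y^k\to B$ is finite and flat of degree $|\W|$, so every component of $\mathcal{Z}_y^k$ surjects onto $B$ and therefore contains one of the $|\W|$ smooth points in the fiber over $\Psi(y)$; generic reducedness plus $S_1$ (CM) then gives reducedness. You argue by \emph{contradiction}: the \'etale locus $\mathcal{Z}_y^k\setminus\mathcal{H}_\W$ is smooth and you show it is dense by ruling out a component $Z\subseteq\mathcal{H}_\W$, using finiteness to force $\Psi(Z)=\pi_k^{-1}(\overline{\eta})$ and then observing $\Psi(y)$ lies in that fiber but outside $\Psi(\mathcal{H}_\W)$ because the orbit of a $\W$-general point is free and avoids $\mathcal{H}_\W$. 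The two arguments lean on the same facts (finiteness of $\Psi$, \'etaleness off $\mathcal{H}_\W$, and the free orbit of $y$), but yours isolates ``no component is contained in the ramification locus'' as the key claim and avoids invoking flatness in this step, while the paper's makes essential use of flatness to propagate the good fiber to every component. Both are valid; your version is slightly more self-contained geometrically, while the paper's is shorter given the flatness already supplied by Chevalley's theorem.
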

\begin{proof} We will show that the sequence $q_i:=\eta_i-\overline{\eta_i}x_0^{d_i}$, $i=1,\dots, n$ is a regular sequence. Since $\mathbb{C}[x_0,\dots, x_n]$ is an integral domain, $x_0$ is a nonzero divisor. Modulo $(x_0)$ the above sequence becomes $\eta_1,\dots, \eta_{n}$ in $\mathbb{C}[x_1,\dots,x_n]$ which is a regular sequence by~\cite[Corollary 5.3.4]{NS} and Theorem~\ref{Chevalley}. By~\cite[Corollary 17.2]{E} any permutation of a graded regular sequence is a regular sequence and thus $q_1,\dots, q_n$ is a regular sequence in $\mathbb{C}[x_0,\dots, x_n]$.  As a result, for any $k$, the scheme $\hat{\mathcal{Z}}_y^k$ has codimension $k$ and is arithmetically Cohen-Macaulay of degree equal to the product of the degrees of the $q_i$. By~\cite[Corollary 18.14]{E}, $\hat{\mathcal{Z}}_y^{k}$ is unmixed and has no embedded components. The fact that $\mathcal{Z}^k_y$ is $\W$-invariant is immediate from the fact that it is defined by $\W$-invariant forms. 
By Theorem~\ref{Chevalley}, the Chevalley mapping $\Psi: U\rightarrow \mathbb{A}^n$ is a finite and flat morphism of degree $|\W|$. Base-changing to the affine subspace $B\subseteq \mathbb{A}^n$ consisting of $(z_1,\dots, z_n)$ such that $z_i=\overline{\eta_i}$ for $1\leq i \leq k$ we obtain a finite and flat morphism $\Psi: \mathcal{Z}^{k}_y\rightarrow B$ of degree $|\W|$. As a result, every component of $\hat{\mathcal{Z}}_y^k$ surjects onto $B$ and it suffices to find a fiber of $\Psi$ consisting of a set of $|\W|$ smooth points to conclude that $\hat{\mathcal{Z}}_y^k$ is reduced. If $y$ is $|\W|$-general then, because the Jacobian of the original Chevalley mapping does not vanish at $y$, we conclude that $y$ is a nonsingular point of $\hat{\mathcal{Z}}_y^k$ and thus, that the $\W$-orbit of $y$ is the desired fiber. 
\end{proof}

The main idea of our proof will be to show that if $F$ is a homogeneous nonnegative form of sufficiently small degree and $y$ is a point where $F$ achieves its minimum on the unit sphere then the curve $\mathcal{Z}^{n-1}_y(\RR)$ is entirely contained on the locus of minima of $F$. As a result, to understand the minima of $F$ we can instead look at the geometry of the curves $\mathcal{Z}^{n-1}_y$. Our next Lemma gives a method to find a real point of these curves lying in the set $\H_\W$. 

\begin{lemma} \label{specialPt} Assume $|x|^2$ is an element of the algebra generated by the first $n-1$ invariants. For every $y\in V\setminus \{0\}$ the curve $\mathcal{C}_y:=\mathcal{Z}_y^{n-1}$ contains a point $x^*\in\H_\W\setminus \{0\}$.
\end{lemma}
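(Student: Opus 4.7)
The plan is to realize $x^*$ as an extremum of the remaining basic invariant $\eta_n$ restricted to $\mathcal{C}_y(\RR)$, and then to use the factorization of the Chevalley Jacobian from Lemma~\ref{fact1} to force $x^*$ onto a reflecting hyperplane.

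The first step is to establish that $\mathcal{C}_y(\RR)$ is a non-empty compact subset of $V$. Non-emptiness is immediate since $y\in \mathcal{C}_y(\RR)$. For compactness, the hypothesis that $|x|^2$ lies in the algebra generated by $\eta_1,\dots,\eta_{n-1}$ gives a polynomial $P$ with $|x|^2 = P(\eta_1(x),\dots,\eta_{n-1}(x))$; substituting the defining equations of $\mathcal{C}_y$ yields $|x|^2 = P(\eta_1(y),\dots,\eta_{n-1}(y)) = |y|^2$ for every $x\in \mathcal{C}_y(\RR)$, so $\mathcal{C}_y(\RR)$ lies on the sphere of radius $|y|$. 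This simultaneously takes care of the requirement $x^*\neq 0$, since $|y|>0$.

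Next, I would choose $x^*\in \mathcal{C}_y(\RR)$ attaining the maximum of the continuous function $\eta_n$ on this compact set, and then show that the gradients $\nabla\eta_1(x^*),\dots,\nabla\eta_n(x^*)$ are linearly dependent. There are two cases: if $\nabla\eta_1(x^*),\dots,\nabla\eta_{n-1}(x^*)$ are already linearly dependent, the claim is immediate; otherwise $x^*$ is a smooth real point of the codimension-$(n-1)$ variety $\{\eta_i = \eta_i(y): i\leq n-1\}$ and the Lagrange multiplier method furnishes scalars $\lambda_i\in\RR$ with $\nabla\eta_n(x^*)=\sum_{i<n}\lambda_i\nabla\eta_i(x^*)$. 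In either case $\nabla\eta_1\wedge\cdots\wedge\nabla\eta_n$ vanishes at $x^*$, so by Lemma~\ref{fact1} some linear form $l_\alpha$ vanishes at $x^*$, placing $x^*$ on the hyperplane orthogonal to $\alpha$, i.e., $x^*\in \H_\W$.

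The only delicate point is the Lagrange multiplier step, since in principle $x^*$ could be a singular point of $\mathcal{C}_y$; the case split above handles this cleanly, so I do not anticipate a serious obstacle. It is worth noting that the hypothesis on $|x|^2$ is used twice in the argument, once to compactify $\mathcal{C}_y(\RR)$ so that the maximum of $\eta_n$ is attained, and once to certify that the extremum is not the origin.
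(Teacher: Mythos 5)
Your proposal is correct and follows essentially the same route as the paper: both proofs use the hypothesis on $|x|^2$ to compactify $\mathcal{C}_y(\RR)$, optimize $\eta_n$ over it, invoke Lagrange multipliers to get linear dependence of the $\nabla\eta_i$ at the extremum, and conclude via the Jacobian factorization of Lemma~\ref{fact1}. The only differences are cosmetic (you take a maximum, the paper a minimum) and that you spell out the case where $\nabla\eta_1(x^*),\dots,\nabla\eta_{n-1}(x^*)$ are already dependent, a degeneracy the paper's phrasing of the multiplier condition absorbs implicitly.
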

\begin{proof} Since $|x|^2$ belongs to the algebra generated by the first $n-1$ invariants, the set of real points of the curve $\mathcal{C}_y$ is contained in the set of points $x$ with $|x|^2=|y|^2$. As a result $\mathcal{C}_y(\RR)$ is a nonempty compact set of nonzero points of $V$. Furthermore the optimization problem 
\[\min \eta_n(x): x\in \mathcal{C}_y(\RR)\] 
has a real solution $x^*$. Any such solution is a local minimum for $\eta_n(x)$ on $\mathcal{C}_y$ and thus, by the Lagrange multipliers Theorem there exists $\lambda\in \RR^{n}\setminus \{0\}$ such that
\[\lambda_1\nabla \eta_1(x^*)+\dots + \lambda_n\nabla\eta_n(x^*)=0\]  
and thus the Jacobian determinant of the Chevalley mapping vanishes at $x^*$. By Lemma~\ref{fact1} it follows that $x^*\in \H_\W$ as claimed. 
\end{proof}

\begin{remark}
The previous proof was inspired to us by ~\cite{Co}.
\end{remark}

\section{ Proof of Main Theorems}

In this section we prove our Main Theorems. The proof of the first Theorem is an application of Bezout's theorem on the reducible curve $\hat{\mathcal{Z}}_y^{n-1}$. It generalizes an argument due to H.W. Schülting~\cite{Ch} in the case of even symmetric sextics. The proof of the second Theorem is constructive and gives us a method to build forms of sufficiently high degree which are nonnegative on $\H_\W$ and not on $V$. 

\begin{proof}[Proof of Theorem \ref{Thm A}]
Suppose $F$ is nonnegative on $\H_\W$ and not on $V$. Let $y$ be a point of $V$ where $F$ reaches its minimum $\mu<0$ on the unit sphere $S$ of $V$. Note that $y$ must be a $\W$-general point. Let $G:=F-\mu|x|^{2d}$. The form $G$ is $\W$-invariant and nonnegative on $V$. We will show that if $2d<2d_n$ then there is a point $x^*\neq 0$ in $\H_\W$ which satisfies $G(x^*)=0$. This shows that $F(x^*)=\mu|x^*|^{2d}<0$ contradicting the fact that $F$ is nonnegative on $\H_\W$. Our proof relies on Bezout's theorem and thus we first base-change to the complex numbers and let $\mathcal{C}_y:=\hat{\mathcal{Z}}_y^{n-1}\subseteq \PP^n$ be the basic $\W$-invariant curve through $y$. By Lemma~\ref{Geom} we know that $\mathcal{C}_y$ is a reduced curve of degree $\prod_{i=1}^{n-1}d_i$. Since $G$ is $\W$ invariant we have two possibilities, either $G$ is identical to zero on $\mathcal{C}_y$ or it is nonzero on every irreducible component of $\mathcal{C}_y$. If $G$ is nonzero on every irreducible component of $\mathcal{C}_y$ then (see~\cite[\S 1.20]{K} for a detailed treatment of the degree theory of line bundles on a singular curve).
\[ 2d\deg(\mathcal{C}_y)=\deg_{\mathcal{C}_y}(\Osh_{\PP^n}(2d))=\sum_{p\in \mathcal{C}_y} e_p(G)\geq 2|\W|\]
where the last inequality occurs because every real zero of a nonnegative polynomial is a minimum, so every point in the orbit of $[1:y]$ contributes at least two to the degree of the line bundle. Since $|\W|=\prod_{i=1}^nd_i$ we conclude that $2d\geq 2d_n$ contradicting our assumption on the degree of the form $G$. We thus conclude that $G$ is identically zero in $\mathcal{C}_y$. By Lemma~\ref{specialPt}, the curve $\mathcal{C}_y$ contains a point $x^*\in \H_\W\setminus \{0\}$ proving our claim.
\end{proof}

\begin{example} The assumption that $|x|^2$ belongs to the algebra generated by the first $n-1$ invariants is necessary for Theorem~\ref{Thm A} to hold. If this hypothesis is not satisfied the statement may fail as the following example shows. Let $\Phi=\{\pm e_1\}\subseteq \RR^2$. The forms  $x_2,x_1^2$ are a set of basic invariants for $\W$. The form $-x_1^2$, of degree $2<4=2d_2$ is nonnegative on $\H_\W=\{x_1=0\}$ but not on V.   
\end{example}

The previous proof implies the following Theorem of complex algebraic geometry, of interest in its own right. It is interesting to ask whether similar statements hold for $\W$-invariant varieties of higher codimension.

\begin{theorem}  If $F$ is a $\W$-invariant hypersurface of degree $d<2d_n$ which is singular at a point $y$ where the Jacobian determinant of the Chevalley mapping does not vanish then $V(F)$ must contain the curve $\mathcal{C}_y$. 
\end{theorem}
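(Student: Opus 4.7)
The plan is to reuse the Bezout-based argument from the proof of Theorem~\ref{Thm A}, with $F$ itself playing the role that was played there by the auxiliary form $G = F - \mu|x|^{2d}$. After base-changing to the complex numbers, I would view $F$ as defining a hypersurface in $\PP^n$. Since the Jacobian determinant of the Chevalley mapping does not vanish at $y$, Lemma~\ref{Geom} ensures that the basic $\W$-invariant curve $\mathcal{C}_y := \hat{\mathcal{Z}}_y^{n-1}$ is reduced, has degree $\prod_{i=1}^{n-1} d_i$, and contains $y$ as a smooth point. The orbit $\W\cdot y$ then consists of $|\W|$ distinct smooth points of $\mathcal{C}_y$, and by $\W$-invariance $F$ is singular at every one of them.

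Suppose for contradiction that $F$ does not vanish identically on $\mathcal{C}_y$. The $\W$-invariance of $F$, combined with transitivity of $\W$ on the irreducible components of $\mathcal{C}_y$, forces $F$ to be nonzero on every component. (Transitivity follows because the $\W$-invariant subring of the coordinate ring of $\mathcal{Z}_y^{n-1}$ is the polynomial ring $\mathbb{C}[\eta_n]$, making the categorical quotient $\mathcal{C}_y/\W \cong \mathbb{A}^1$ irreducible; this is the same fact already used implicitly in Theorem~\ref{Thm A}.) Bezout's theorem then gives
\[ d \cdot \deg(\mathcal{C}_y) = \sum_{p \in \mathcal{C}_y} e_p(F) \geq 2 |\W|, \]
where the inequality uses that at each of the $|\W|$ orbit points, $\mathcal{C}_y$ is smooth and $\mathrm{mult}_p(F) \geq 2$. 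Substituting $\deg(\mathcal{C}_y) = \prod_{i=1}^{n-1}d_i$ and $|\W| = \prod_{i=1}^n d_i$ yields $d \geq 2 d_n$, contradicting the assumption $d < 2 d_n$.

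The main step requiring care is the bound $e_p(F) \geq 2$ at every orbit point: the key is that $y$ is $\W$-general, so by the argument in Lemma~\ref{Geom} every point of $\W\cdot y$ is smooth on $\mathcal{C}_y$, and the local intersection multiplicity of a hypersurface with a smooth curve is bounded below by the multiplicity of the hypersurface. The rest of the argument transfers almost verbatim from Theorem~\ref{Thm A}, since there the multiplicity-$2$ contribution at each orbit point arose from $G$ having a real minimum there (gradient zero), while here it arises directly from the hypothesis that $F$ is singular at $y$.
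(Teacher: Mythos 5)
Your proof is correct and is essentially the argument the paper has in mind: the paper explicitly remarks that this theorem ``follows from the previous proof,'' and you correctly transplant the Bezout argument from Theorem~\ref{Thm A}, replacing the multiplicity-$\geq 2$ contribution coming from a real minimum of $G$ by the one coming directly from the singularity of $F$ at the orbit of $y$, which yields $d\cdot\prod_{i<n}d_i\geq 2|\W|$ and hence the contradiction $d\geq 2d_n$. One small caveat: your parenthetical justification of the dichotomy via ``transitivity of $\W$ on the components of $\mathcal{C}_y$'' is not fully established by irreducibility of the quotient alone (irreducibility of $\mathcal{C}_y/\W\cong\mathbb{A}^1$ does not by itself preclude several $\W$-orbits of components meeting pairwise); the cleaner route, implicit in the paper, is that $F|_{\mathcal{Z}_y^{n-1}}$ lies in the invariant subring $\mathbb{C}[\eta_n]$, so if it is not identically zero its zero locus is the finite preimage of finitely many values of $\eta_n$, hence meets no component in a whole curve.
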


\begin{proof}[Proof of Theorem \ref{Thm B}] Let $y\in V$ be a $\W$-general point on the unit sphere $S$ of $V$ where none of the basic invariants of $\W$ vanishes. Let $X:={\rm Cone}(\W y)\subseteq V$ be the cone over the $\W$-orbit of $y$. Let $J$ be any homogeneous ideal with $V_\RR(J)=X$ and let $\beta$ be the maximum degree of one of its generators. We will construct a homogeneous $\W$-invariant form of degree $2\beta$ which is nonnegative on $\H_\W$ and which is strictly negative on $y$. To this end, let $h_1,\dots, h_s$ be a generating set of the vector space $J_\beta$ which is closed under the action of $\W$. Define $\phi:=h_1^2+\dots + h_s^2$ and note that $\phi$ is $\W$-invariant, nonnegative, and satisfies $V_\RR(\phi)=X$. Let $\mu=\min\{\phi(x): x\in \H_\W\cap S\}$. Then $\mu>0$ and thus $\overline{\phi}:=h_1^2+\dots+h_s^2-\frac{\mu}{2}|x|^{2\beta}$ is a homogeneous, $\W$-invariant form, which is nonnegative on $\H_\W$ and satisfies $\overline{\phi}(y)=-\frac{\mu}{2}<0$. Next, we obtain an upper bound for the number $\beta$ by constructing homogeneous ideals $J$ with $V_\RR(J)=X$. 

Let $O=\{j: 1\leq j\leq n\text{ and }\deg(\eta_j)\text{ is odd} \}$. If $O\neq \emptyset$ then let $\underline{o}:=\min O$ and $\overline{o}=\max O$ and for a basic invariant $\eta_j$ define $\overline{\eta_j}:=\eta_j(y)$. Define homogeneous polynomials $p_j$  for $1\leq j\leq n$ by
\[
p_j:=\begin{cases}
\eta_j-\overline{\eta_j}|x|^{d_j}\text{, if $j\not\in O$}\\
\eta_j\eta_{\underline o}-\overline{\eta_j} \overline{\eta_{\underline o}}|x|^{d_j+\underline{o}}\text{, if $j\in O$}\\
\end{cases}
\] 
We claim that $V_\RR(p_1,\dots, p_n)=X$. Equivalently we will show that $V_\RR(p_1,\dots, p_n)\cap S$ equals $\W y\cup -\W y$. Since $V_\RR(p_1,\dots, p_n)$ is generated by homogeneous $\W$-invariant forms of even degree it is immediate that $\W y \cup -\W y\subseteq V_\RR(p_1,\dots, p_n)\cap S$. For the opposite inclusion, if $x\in V_\RR(p_1,\dots, p_n)\cap S$ then $\eta_{\underline{o}}(x)^2=\eta_{\underline{o}}(y)^2$ and thus $\eta_{\underline{o}}(x)=\pm \eta_{\underline{o}}(y)$. It follows that either $\eta_i(x)=\eta_i(y)$ for all $i$ with $1\leq i\leq n$ or $\eta_i(x)=\eta_i(-y)$ for all $i$ with $1\leq i\leq n$ so that $x\in \W y\cup -\W y$ (note that the two alternatives coincide if $\W$ has no invariants of odd degree). The largest degree of a polynomial $p_j$ is $\max(d_n, \underline{o}+\overline{o})$ and this is an upper bound for $\beta$ in the previous paragraph. 
As a result, the $\W$-invariant form $\overline{\phi}$ obtained from the ideal $J=(p_1,\dots, p_n)$ has degree $2\max(d_n,\underline{o}+\overline{o})$, is nonnegative on $\H_\W$ and is strictly negative on $y$ as claimed.
\end{proof}

\begin{proof}[Proof of Theorem \ref{CI}] Suppose that $F$ is nonnegative on the algebraic set defined by the maximal minors of the matrix with columns $\nabla g_1,\dots, \nabla g_j$ and not on $V$. Let $\mu<0$ be the absolute minimum of $F$ on the unit sphere in $V$ and let $y$ be a point where this minimum is achieved. Define $G:=F-\mu|x|^{2d}$ where $2d=\deg(F)$ and note that $G$ is nonnegative on $V$. Let $\overline{g_i}=g(y_i)$ and let $\mathcal{Z}$ be the variety defined by $(g_i-\overline{g_i}: 1\leq i\leq j)$. Since $|x|^2$ is an element of $\RR[g_1,\dots, g_j]$ we know that $\mathcal{Z}(\RR)$ is a closed subset of the unit sphere in $V$ and thus compact. Moreover, letting $\overline{A}:= A(\overline{g_1},\dots, \overline{g_j})$ and $\overline{B}:=B(\overline{g_1},\dots, \overline{g_j})$ the form $G$ restricts to a function $\phi$ on $\mathcal{Z}$ given by
\[ \phi(x)=\overline{A}-\mu|y|^{2d}+g_{j+1}(x)\overline{B}\]
which satisfies $\phi(y)=0$ and $\phi(x)\geq 0$ for all $x\in \mathcal{Z}$. We have two cases, either $\overline{B}=0$ and $\phi$ is identical to zero in $\mathcal{Z}$ or $\overline{B}\neq 0$. We will show that we reach a contradiction in both cases.
In the first case, arguing as in Lemma~\ref{specialPt} we conclude that there is a point $x^*\in \mathcal{Z}(\RR)$ where the maximal minors of $\nabla g_1,\dots, \nabla g_j$ vanish. If $\overline{B}\neq 0$ then the point $y$ is a global and hence local minimum of $\phi$ on $\mathcal{Z}$. By the Theorem of Lagrange multipliers there exist $(\lambda_0,\dots, \lambda_j)\neq 0$ such that $\lambda_0\nabla \phi(y) +\sum_{i=1}^j\lambda_i\nabla g_i(y)=0$. Since $\nabla\phi = \overline{B}\nabla g_{j+1}$ it follows that the matrix with columns $\nabla g_1(y),\dots, \nabla g_{j+1}(y)$ has a nonzero element in its kernel and thus that $y$ belongs to the algebraic set defined by the maximal minors of the matrix with columns $\nabla g_1,\dots \nabla g_j$, a contradiction.
\end{proof}

\begin{proof}[Proof of Corollary~\ref{corCI}] $(1)$ Since the forms $g_1,\dots, g_n$ are a regular sequence the algebra $\RR[g_1,\dots, g_n]$ is isomorphic to a graded polynomial ring. As a result, for every homogeneous element $F\in \RR[g_1,\dots, g_n]$ there exists a unique polynomial $H\in \RR[z_1,\dots, z_n]$ such that $F=H(g_1,\dots, g_n)$. If $\deg(F)<2d_n$ then no monomial in $H$ can be divisible by $z_n^2$ and $H=A(z_1,\dots, z_{n-1})+z_n B(z_1,\dots, z_{n-1})$. The claim follows from Theorem~\ref{CI}.  $(2)$ Follows immediately from Theorem~\ref{CI}.
\end{proof}

Next we analyze the consequences of Theorem~\ref{CI} on forms which are invariant under a reflection group $\W$.

\begin{definition}\label{DefMF} A sequence of basic invariants $\eta_1,\dots \eta_n$ satisfy the minor factorization condition if for every $x\in V$ and every $j\in 1,\dots, n$ the following equality holds
\[\rank\left(\nabla \eta_1(x),\dots,\nabla \eta_{n}(x)\right)\leq j-1 \iff \rank \left(\nabla \eta_1(x),\dots,\nabla \eta_{j}(x)\right)\leq j-1\]
\end{definition}
The motivation for the terminology comes from the fact that, in some cases, the equivalence is a consequence of a factorization property of the minors of the Jacobian of the Chevalley mapping. The following example shows that this is the case for the Weyl group of $B_n$. An analogous argument shows that this property also holds for type $A$.

\begin{example} Let $\Phi=\{\pm e_i: 1\leq i \leq n\}\cup \{\pm e_i\pm e_j:1\leq i\neq j\leq n\}\subseteq \mathbb{R}^n$ and let $\eta_i:=\sum_{j=1}^n x_j^{2i}$. The group $\W$ is the Weyl group of $B_n$ and $\eta_1,\dots, \eta_n$ are a set of basic invariants. For $1\leq i_1< \dots < i_k\leq n$ and $1\leq j_1< \dots < j_k\leq n$ Let $\Delta_{i_1,\dots, i_k}^{j_1,\dots, j_k}$ be the determinant of the submatrix of the Jacobian of the Chevalley mapping with rows indexed by $i_1,\dots, i_k$ and columns indexed $j_1,\dots, j_k$. We show that the minor factorization property holds by proving that for every $i_1,\dots,i_k$ and $j_1,\dots, j_k$, the polynomial  $\Delta_{i_1,\dots,i_k}^{1,\dots, k}$ divides $\Delta_{i_1,\dots,i_k}^{j_1,\dots, j_k}$. To see this, note that the subgroup of $B_n$ which fixes the components $x_{s_j}$ for $s_j\in[n]\setminus \{i_1,\dots, i_k\}$ is isomorphic to $B_k$ and has as set of fundamental invariants $\eta_1,\dots, \eta_k, x_{s_1},\dots, x_{s_{n-k}}$. Thus, the Jacobian determinant of its Chevalley mapping is precisely $J=\Delta_{i_1,\dots, i_k}^{1,\dots, k}$. On the other hand, the determinant of the submatrix  of $\nabla \eta_{j_1},\dots, \nabla \eta_{j_k}, \nabla x_{s_1},\dots, \nabla x_{s_{n-k}}$ with rows $i_1,\dots, i_k$ is an alternating function for $B_k$ and thus, by Proposition~\cite[pag. 69]{H} its determinant $\Delta_{i_1,\dots, i_k}^{j_1,\dots, j_k}$ is divisible by $J$ as claimed.
\end{example}

\begin{remark} There are several possible explanations for the above minor factorization conditions. In the $A_{n-1}$ and $BC_n$ cases the ratios between the various minors are Schur polynomials. But this is not always the case, for example, taking a explicit set of basic invariants for $D_3$, for instance $x_1^2+x_2^2+x_3^2$, $x_1x_2x_3$ and $x_1^4+x_2^4+x_3^4$~\cite{Iwa}, the ratios between the various minors is not a polynomial. However, the vanishing locus of the maximal minors corresponding to the first $k$ columns of their Jacobian is the same as the vanishing locus of all $k\times k$ minors of the Jacobian for each $k=1,2,3$ (see the next Example). We conjecture that both vanishing loci are the same for each $k=1,2,\dots,n$ for a finite reflection group of rank $n$, i.e., the minor factorization property holds for every finite reflection group. \end{remark}

\begin{example}\label{locus}
Consider the jacobian matrix $J_{D_3}$ of the Chevalley map of $D_3$ corresponding to the basic invariants above.
$$J_{D_3}(x)=\arraycolsep=8pt\def\arraystretch{2.2}\left[\begin{array}{c c c}
2x_1 & x_2x_3 & 4x_1^3\\
2x_2 & x_1x_3 & 4x_2^3\\
2x_3 & x_1x_2 & 4x_3^3
\end{array}\right]$$
\end{example}
The vanishing locus of the $2\times2$ minors of the first two columns is given by the solution set to the following system:
\begin{eqnarray*}
x_3(x_1^2-x_2^2)=0\\
x_2(x_1^2-x_3^2)=0\\
x_1(x_2^2-x_3^2)=0
\end{eqnarray*}
If one of the variables is zero then another one must be zero as well and therefore the third one can assume any value. If none of the $x_i$ vanish then all must have the same square. Therefore the vanishing locus is $$\{(t,0,0),(0,t,0),(0,0,t),(t,t,t),(t,t,-t),(t,-t,t),(t,-t,-t):t\in\RR\}.$$ Up to symmetry, for a given $t$, there are three kinds of points in the locus above: $(t,0,0)$, $(t,t,t)$ and $(t,t,-t)$. Plugging in each one of them in the matrix above we get:
$$\arraycolsep=8pt\def\arraystretch{2.2}\left[\begin{array}{c c c}
2t & 0 & 4t^3\\
0 & 0 & 0\\
0 & 0 & 0
\end{array}\right]\,\,\,\,\,\,\,\,\,\,\,\arraycolsep=8pt\def\arraystretch{2.2}\left[\begin{array}{c c c}
2t & t^2 & 4t^3\\
2t & t^2 & 4t^3\\
2t & t^2 & 4t^3
\end{array}\right]\,\,\,\,\,\,\,\,\,\,\,\arraycolsep=8pt\def\arraystretch{2.2}\left[\begin{array}{c c c}
2t & -t^2 & 4t^3\\
2t & -t^2 & 4t^3\\
-2t & t^2 & -4t^3
\end{array}\right]$$

Since all $2\times2$ minors of each matrix vanish, we conclude that the vanishing locus of all $2\times2$ minors of $J_{D_3}(x)$ coincides with the vanishing locus of the $2\times2$ minors of the first two columns.

\begin{cor} \label{highCodim}Assume $\eta_1,\dots \eta_n$ satisfy the minor factorization property and that $|x|^2$ is in the algebra generated by the basic invariants $\eta_1,\dots, \eta_{j-1}$. Define $\mathcal{S}$ as the union of all subspaces obtained by intersecting $n-j+1$ linearly independent hyperplanes orthogonal to the elements of the root system $\Phi$ which defines $\W$ (i.e., the set of $(j-1)$-dimensional faces of $\H_\W$). The set $\mathcal{S}$ is a witness set for nonnegativity of any form $F\in  \RR[\eta_1,\dots, \eta_j]$ which is linear in $\eta_j$.
\end{cor}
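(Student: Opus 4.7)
The plan is to reduce the problem to Theorem~\ref{CI} applied with $g_i = \eta_i$, interpret the resulting rank-drop locus using the minor factorization hypothesis, and identify it with $\mathcal{S}$ through a classical fact about the Jacobian of the Chevalley map.

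First, since $F$ is linear in $\eta_j$, write
\[
F = A(\eta_1,\dots,\eta_{j-1}) + \eta_j \, B(\eta_1,\dots,\eta_{j-1})
\]
for polynomials $A,B$. Applying Theorem~\ref{CI} with $g_i := \eta_i$ and the theorem's index equal to $j-1$ (so its $g_{j+1}$ becomes our $\eta_j$, and the hypothesis $|x|^2 \in \RR[g_1,\dots,g_{j-1}]$ holds by assumption) yields that $F$ is nonnegative on $V$ if and only if $F$ is nonnegative on the set $Z$ where the $n \times j$ matrix $[\nabla\eta_1,\dots,\nabla\eta_j]$ has rank at most $j-1$. By the minor factorization property, $Z$ coincides with the locus where the full Jacobian $J(x) := [\nabla\eta_1(x),\dots,\nabla\eta_n(x)]$ has rank at most $j-1$.

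It remains to prove $\{x : \rank J(x) \leq j-1\} = \mathcal{S}$. Write $k(x)$ for the maximal number of linearly independent roots $\alpha \in \Phi$ with $x \in H_\alpha$, so that the defining condition for $x \in \mathcal{S}$ is $k(x) \geq n-j+1$. If $\W_x$ denotes the stabilizer of $x$ in $\W$, then $\W_x$ is generated by the reflections through those root-orthogonal hyperplanes containing $x$, and $\dim V^{\W_x} = n - k(x)$. Differentiating $\eta_i(gy)=\eta_i(y)$ at $y=x$ for each $g \in \W_x$ shows $\nabla\eta_i(x) \in V^{\W_x}$, giving the upper bound $\rank J(x) \leq n-k(x)$ for free. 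For the reverse inequality, restrict $\Psi$ to the subspace $V^{\W_x}$: by a theorem of Steinberg (see~\cite{H}), this restriction factors through the quotient of $V^{\W_x}$ by the reflection group $\overline{N} := N_\W(\W_x)/\W_x$ as a closed embedding into $V/\W = \RR^n$. Moreover, $x$ lies on no reflection hyperplane of $\overline{N}$ in $V^{\W_x}$: any $\overline{N}$-reflection $\bar{r}$ fixing $x$ would lift to some $r \in N_\W(\W_x)$ with $rx=x$, whence $r \in \W_x$ and $\bar{r}=1$, a contradiction. Lemma~\ref{fact1} applied to $\overline{N}$ on $V^{\W_x}$ then shows that its Chevalley map is a local isomorphism at $x$, so $d(\Psi|_{V^{\W_x}})_x$ is injective and $\rank J(x) \geq \dim V^{\W_x} = n-k(x)$. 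Hence $\rank J(x) \leq j-1$ if and only if $k(x) \geq n-j+1$ if and only if $x \in \mathcal{S}$, completing the proof.

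The main obstacle is the equality $\rank J(x) = \dim V^{\W_x}$: the upper bound follows at once from $\W$-equivariance, but the matching lower bound requires the Steinberg-theoretic argument above, which identifies $x$ as a regular point for the residual normalizer action on $V^{\W_x}$ and thereby transfers the rank calculation on the ambient $\Psi$ to the Chevalley map of the smaller reflection group $\overline{N}$.
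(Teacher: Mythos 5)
Your reduction is exactly the paper's: write $F = A(\eta_1,\dots,\eta_{j-1}) + \eta_j B(\eta_1,\dots,\eta_{j-1})$, apply Theorem~\ref{CI} to obtain the rank-drop locus of $[\nabla\eta_1,\dots,\nabla\eta_j]$, and use the minor factorization hypothesis to replace it by the locus where the full Jacobian has rank at most $j-1$. For the final identification of that locus with $\mathcal{S}$, however, the paper simply invokes the theorem of Steinberg~\cite{St} (which is precisely the statement that $\operatorname{rank} J(x) = n - k(x)$, with $k(x)$ the number of linearly independent roots orthogonal to $x$), whereas you attempt to re-derive it. Your upper bound $\operatorname{rank} J(x) \leq \dim V^{\W_x}$ via equivariance is fine. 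The lower bound, though, rests on the assertion that $\overline{N} := N_\W(\W_x)/\W_x$ acts on $V^{\W_x}$ \emph{as a reflection group}, which you then feed into Lemma~\ref{fact1} to conclude that the Chevalley map of $\overline{N}$ has nonvanishing Jacobian at the $\overline{N}$-regular point $x$. That assertion is a genuinely nontrivial claim (for Weyl groups it goes back to Howlett's analysis of normalizers of parabolics; for the non-crystallographic groups $H_3, H_4$ it requires separate verification and in some configurations the normalizer quotient is not a reflection group on the fixed subspace), it is not stated in~\cite{H}, and Lemma~\ref{fact1} as quoted applies only to a reflection group defined by a root system. The claim that $\Psi|_{V^{\W_x}}$ factors through $V^{\W_x}/\overline{N}$ as a closed embedding is likewise not available from the references used in the paper. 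So as written, your re-derivation of Steinberg's rank theorem has a gap at precisely the point where the paper's proof discharges the burden with a single citation; you should either cite~\cite{St} directly for the equality $\{\, x : \operatorname{rank} J(x) \leq j-1 \,\} = \mathcal{S}$, or supply a self-contained proof of the lower bound that does not presuppose $\overline{N}$ is a reflection group (for instance, by working with the basic invariants of the parabolic $\W_x$, for which the fixed-subspace coordinates have degree one and the remaining generators have degree at least two, so their gradients vanish at $x$, reducing the rank computation to the finite map $V/\W_x \to V/\W$).
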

\begin{proof} Arguing as in the proof of Corollary~\ref{corCI} the claim reduces to proving that the locus of points $x$ where $\rank(\nabla \eta_1(x),\dots, \nabla \eta_j(x))<j$ coincides with $\mathcal{S}$. By the minor factorization property this set consists of points where the rank of the Jacobian of the Chevalley mapping is at most $j-1$ which coincides with $\mathcal{S}$ by a well known Theorem of Steinberg~\cite{St}. \end{proof}

Since $B_n$ satisfies the minor factorization property, part $(a)$ of the previous Corollary implies that an even symmetric sextic is nonnegative if and only if it is so on the intersection of any $n-2$ linearly independent hyperplanes perpendicular to elements of $B_n$ and that an even symmetric quartic is nonnegative if and only if it is so at the $n$ points $(1,0\ldots,0),(1,1,0,\ldots,0),\ldots,(1,1\ldots,1)$ which, up to the action of $B_n$, span the subspaces that can be obtained by intersecting any $n-1$ linearly independent hyperplanes perpendicular to elements of $B_n$.

\begin{remark} As this article was being completed we learned of independent in progress work by Friedl, Riener and Sanyal~\cite{RSF} where they provide the first high codimension witness sets valid for all finite reflection groups. 
\end{remark}

\end{document}